	%
%

\documentclass[a4paper,12pt]{amsart}  
\usepackage{amsmath,amssymb,amsthm}     
\usepackage[utf8]{inputenc}
\usepackage{enumitem}

\addtolength{\oddsidemargin}{-2cm}
\addtolength{\evensidemargin}{-2cm}
\addtolength{\headheight}{5pt}
\addtolength{\headsep}{.5cm}
\addtolength{\textheight}{-.0cm}
\addtolength{\textwidth}{4cm}
\addtolength{\footskip}{.5cm}
\parskip1ex

\theoremstyle{plain}      
 
\newtheorem{thm}{Theorem}[section]     
\newtheorem{theorem}[thm]{Theorem}     
\newtheorem{cor}[thm]{Corollary}     
\newtheorem{corollary}[thm]{Corollary}     
     
\newtheorem{lemma}[thm]{Lemma}     
\newtheorem{prop}[thm]{Proposition}     
\newtheorem{proposition}[thm]{Proposition}

\theoremstyle{definition}      
\newtheorem{defn}[thm]{Definition}

\newtheorem{remark}[thm]{Remark}


\DeclareMathAlphabet{\doba}{U}{msb}{m}{n}         
\gdef\mC{\doba{C}}

\gdef\mR{\doba{R}}

\def\cL{\mathcal{L}}

\def\di{{\rm d}}

\let\<\langle 
\let\>\rangle

\newcommand{\definedas}{\mathrel{\raise.095ex\hbox{\rm :}\mkern-5.2mu=}}

\title{}

\author{Farid Madani, Andrei Moroianu, Mihaela Pilca}

\address{Farid Madani\\Fakult\"at f\"ur Mathematik\\
	Universit\"at Regensburg\\Universit\"atsstr. 31 
	D-93040 Regensburg, Germany}
\email{madani@mathematik.uni-regensburg.de}

\address{Andrei Moroianu \\ Laboratoire de Mathématiques d’Orsay, Univ. Paris-Sud, CNRS, Université Paris-Saclay, 91405 Orsay, France}
\email{andrei.moroianu@math.cnrs.fr}

\address{Mihaela Pilca\\Fakult\"at f\"ur Mathematik\\
Universit\"at Regensburg\\Universit\"atsstr. 31 
D-93040 Regensburg, Germany}
\email{mihaela.pilca@mathematik.uni-regensburg.de}

\subjclass[2010]{53A30, 53B35, 53C25, 53C29, 53C55}

\keywords{locally conformally K\"ahler structure, holomorphic Lee vector field, Vaisman structure, lcK structure with potential}

\begin{document}   

\title[LcK structures  with holomorphic Lee vector field]{LcK structures  with holomorphic Lee vector field on Vaisman-type manifolds}

\begin{abstract}
We give a complete description of all locally conformally K\"ahler structures with holomorphic Lee vector field on a compact complex manifold of Vaisman type. This provides in particular examples of such structures whose Lee vector field is not homothetic to the Lee vector field of a Vaisman structure. More generally, dropping the condition of being of Vaisman type, we show that on a compact complex manifold, any lcK metric with potential and with holomorphic Lee vector field admits a potential which is positive and invariant along the anti-Lee vector field.
\end{abstract}
\maketitle

\section{Introduction}

A Hermitian metric $g$ on a complex manifold $(M,J)$ is called locally conformally K\"ahler (in short, lcK) if  around  any  point  in $M$,  the  metric $g$ can  be  conformally  rescaled  to  a K\"ahler metric. This condition is equivalent to the existence of a closed $1$-form $\theta$ such that $\di\Omega=\theta\wedge\Omega$, where $\Omega$ denotes the fundamental $2$-form defined as $\Omega(\cdot, \cdot):=g(J\cdot, \cdot)$. The $1$-form $\theta$ is called the Lee form and its metric dual is called the Lee vector field. In this paper we assume that $\theta\neq 0$, \emph{i.e.} $(J,g,\Omega)$ is not K\"ahler. 

A special class of lcK structures is represented by the so-called Vaisman structures, defined by the property that the Lee form is non-zero and parallel with respect to the Levi-Civita connection of $g$. It is known that a Vaisman structure on a complex manifold is uniquely determined, up to a positive constant, by its Lee form, via the following identity: $$\Omega=\frac{1}{|\theta|^2}(\theta\wedge J\theta-\di J\theta).$$
On a compact complex manifold of Vaisman type, \emph{i.e.} admitting at least one Vaisman metric, the Lee vector fields of all Vaisman structures are holomorphic, and coincide up to a positive multiplicative constant. This fact was originally obtained in \cite{ts}, but for the reader's convenience we give below an alternative proof. 

In \cite{mmo}, A. Moroianu, S. Moroianu and L. Ornea proved that a compact lcK manifold with holomorphic Lee vector field is Vaisman if the Lee vector field either has constant norm or is divergence-free. Moreover, they also construct examples of non-Vaisman lcK structures with holomorphic Lee vector field on a compact  manifold of Vaisman type. These lcK structures, however, have the same Lee vector field as the Vaisman structures. More recently, F. Belgun \cite{b} constructed examples of lcK manifolds with holomorphic Lee vector field on compact complex manifolds which are not of Vaisman type. 

In this paper we describe all lcK structures with holomorphic Lee vector field on compact complex manifolds of Vaisman type. In particular, this description provides examples of such structures whose Lee vector field is not homothetic to the common Lee vector field of the Vaisman structures (see Remark~\ref{remexpl}) and which were not known to exist up to date. We also describe all holomorphic vector fields on Vaisman-type manifolds which can be obtained as Lee vector fields of lcK structures.

After introducing the needed notions and notation in a preliminary section, we will recall  in Section \ref{sectdeform} the description, due to K. Tsukada \cite{ts}, of the space of  all $1$-forms which occur as Lee forms of Vaisman structures. We give here a new presentation, based on  two types of deformations of Vaisman structures. 

In Section \ref{mainsect} we prove the main results.
Given an lcK structure $(\Omega, \theta)$ with holomorphic Lee  vector field $T$ on a compact complex manifold $(M,J)$ of Vaisman type, we show in a first step that there exists a Vaisman structure $(g_0,\Omega_0,\theta_0)$ on $(M,J)$ adapted to this lcK structure, in the sense that its Lee form $\theta_0$ is cohomologous to $\theta$ and is $JT$-invariant. This follows by an averaging process and using the aforementioned deformations of Vaisman structures, as well as the convexity of the space of Lee forms of Vaisman structures. In particular, the anti-Lee vector field $JT$ is holomorphic  and Killing with respect to the Vaisman metric $g_0$. The space of all holomorphic Killing vector fields on a compact Vaisman manifold will be described as a separate result (cf. Lemma \ref{holkill}), which allows us to deduce that $JT$ is completely determined by a function $a$, via the formula $JT=aJT_0-J\mathrm{grad}^{g_0}a$. Moreover, we prove that the function $a$ is necessarily positive (see Theorem  \ref{lckpothol}).

Conversely, we show that on a compact Vaisman manifold $(M,J,g_0,\Omega_0,\theta_0)$, given a holomorphic Killing vector field of the form $K=aJT_0-J\mathrm{grad}^{g_0}a$, where $a$ is a positive function, there exists an lcK structure with holomorphic Lee vector field equal to $-JK$. Moreover, we describe all these lcK structures by two parameters, namely a $K$-invariant function and a $K$-invariant twisted harmonic form of type $(0,1)$, which satisfy two certain positivity conditions, thus defining an open set in some infinite-dimensional vector space (see Theorem \ref{mainthm}).

As a last result, we show that on a compact lcK manifold, not necessarily of Vaisman type,
if we additionally assume that the lcK structure with holomorphic Lee vector field $T$ has a potential, then there exists a $JT$-invariant potential which is positive everywhere.

{\bf Acknowledgments.} This work was supported by the Procope Project No. 57445459 (Germany) /  42513ZJ (France).

\section{Preliminaries on lcK and Vaisman manifolds}

Let $(M,J)$ be a compact complex manifold. An lcK structure on $(M,J)$ is a Hermitian metric $g$ whose fundamental form $$\Omega:=g(J\cdot,\cdot)$$ satisfies the lcK condition $\di \Omega=\theta\wedge\Omega$, for some closed 1-form $\theta$ called the Lee form. The metric dual $T$ of $\theta$ will be called the Lee vector field. 

The lcK condition is conformally invariant: if $(\Omega,\theta)$ is an lcK structure on $(M,J)$, then $(e^f\Omega,\theta+\di f)$ is an lcK structure on $(M,J)$ for every smooth function $f$. An lcK structure is thus globally conformally Kähler if and only if its Lee form is exact. If this does not hold, the lcK structure is called {\em strict}.

On a manifold $M$ endowed with a closed 1-form $\theta$, one may introduce the twisted differential $\di_\theta:=\di-\theta\wedge\cdot$, which satisfies $\di_\theta^2=0$, and thus defines so-called Morse-Novikov (or twisted) cohomology groups $H^*_\theta(M)$.
If $(M,J)$ is a complex manifold, the twisted differential  can be decomposed as follows: 
$$\di_\theta=\partial_\theta+\bar\partial_\theta,$$ with
$$\partial_\theta:=\partial-\theta^{1,0}\wedge\cdot \ ,\qquad\bar\partial_\theta:=\bar\partial-\theta^{0,1}\wedge\cdot\ ,$$
where $\theta^{1,0}:=\frac{1}{2}(\theta+iJ\theta)$ and $\theta^{0,1}:=\frac{1}{2}(\theta-iJ\theta)$. One may also introduce the real operator $$\di^c_\theta:=i(\bar\partial_\theta-\partial_\theta)=[J,\di_\theta],$$ where $J$ acts on exterior forms as a derivation. Since $\di_\theta^2=0$, we immediately get
$$\partial_\theta^2=0,\qquad \bar\partial_\theta^2=0,\qquad \partial_\theta\bar\partial_\theta+\bar\partial_\theta\partial_\theta=0,\qquad \di_\theta\di^c_\theta=2i\partial_\theta\bar\partial_\theta.$$

Note that, by definition, a Hermitian form $\Omega$ on $(M,J)$ is lcK if and only if there exists a closed 1-form $\theta$ such that $\di_\theta\Omega=0$. The lcK structure $(\Omega,\theta)$ is called $\di_\theta$-exact if there exists a $1$-form $\beta$ such that $\Omega=\di_\theta\beta$.

If $\theta':=\theta+\di f$, then $\di_{\theta'}=e^f\di_\theta e^{-f}$. Similar conjugation relations hold for the other operators introduced above, so that their properties only depend on the cohomology class of~$\theta$. In particular, for second order operators the following relation holds:
	\begin{equation}\label{reltwisteddiff}
	\mathrm{d}_{\theta'}\mathrm{d}_{\theta'}^c(e^{f} h)=e^f\mathrm{d}_{\theta}\mathrm{d}_{\theta}^c h, \quad \text{ for any } h\in\mathcal{C}^\infty(M).
	\end{equation}

\begin{defn} 
An lcK metric $g$ on $(M,J)$ is called Vaisman if its Lee vector field $T$ is Killing with respect to~$g$ and non-zero. A complex manifold $(M,J)$ is called of Vaisman type if it admits a Vaisman structure.
\end{defn}

Let $\nabla$ denote the Levi-Civita connection of $g$. Since $\theta$ is closed, $\nabla \theta$ is a symmetric bilinear form. Moreover, the Lie derivative of $g$ with respect to $T$ is equal to the symmetric part of $\nabla\theta$. The above condition that $T$ is a Killing vector field is thus equivalent to the more familiar condition that $\nabla \theta=0$. 

Assume from now on that $(M,J,g_0, \Omega_0,\theta_0)$ is Vaisman. Being parallel, $\theta_0$ has constant norm with respect to $g_0$. Moreover, $T_0$ is  holomorphic with respect to $J$, i.e. $\cL_{T_0}J=0$ (cf. \cite{v} or \cite[Lemma 3]{mmo}). Hence, it also follows that $\cL_{T_0}\Omega_0=0$, so using the Cartan formula we get 
$$0=\cL_{T_0}\Omega_0=\di(T_0\lrcorner\Omega_0)+T_0\lrcorner\di\Omega_0=\di J\theta_0+T\lrcorner(\theta_0\wedge\Omega_0)=\di J\theta_0+|\theta_0|^2\Omega_0-\theta_0\wedge J\theta_0,$$
whence the well known formula
\begin{equation}\label{djt}
\di J\theta_0=\theta_0\wedge J\theta_0 -|\theta_0|^2\Omega_0,
\end{equation} 
which can also be written as 
\begin{equation}\label{pv}\Omega_0=d_{\theta_0}d^c_{\theta_0}\left(\frac1{|\theta_0|^2}\right).
\end{equation}  

\begin{defn}\label{potential} An lcK structure $(J,\Omega,\theta)$ for which there exists a function $h$ such that $\Omega=d_{\theta}d^c_{\theta}(h)$ is called an lcK structure with potential \cite{ov2010}. 
\end{defn}

The above formula \eqref{pv} shows that Vaisman metrics have potential, and by \eqref{reltwisteddiff}, if $(\Omega,\theta)$ is an lcK structure with potential $h$, then $(e^f\Omega,\theta+\di f)$ is an lcK structure with potential $e^fh$. 

\begin{remark}\label{strict}
Every lcK metric with potential, in particular every Vaisman metric, on a compact complex manifold is strict. Indeed, if $\theta=-\di f$ and $\Omega=\mathrm{d}_{\theta}\mathrm{d}_{\theta}^c h$, then \eqref{reltwisteddiff} shows that $e^f\Omega=\di\di^c(e^fh)$ is a Kähler metric with global potential, which is not possible on compact manifolds.
\end{remark}

Note that there exist lcK metrics without potential (cf. \cite{ov2010}), even on Vaisman-type manifolds (cf. \cite{goto}). 
However, N. Istrati has recently shown the following result which will be very useful for us in the sequel.

\begin{prop}\label{decomposition} (cf. \cite[Theorem 6.2]{ni})
	Let $(M,J,g_0,\Omega_0,\theta_0)$ be a compact Vaisman manifold and let $(\Omega, \theta_0)$ be an lcK structure on $(M,J)$ with the same Lee form. Then there exists a function $h\in \mathcal{C}^\infty(M, \mR)$ and a form $\alpha\in\Omega^{0,1}(M)$ in the (finite dimensional) kernel of the twisted Laplacian $\Delta_{\bar\partial_{\theta_0}}:=\bar\partial_{\theta_0}\bar\partial_{\theta_0}^*+\bar\partial_{\theta_0}^*\bar\partial_{\theta_0}$, such that:
	\begin{equation}\label{decomp}
	\Omega=d_{\theta_0}d^c_{\theta_0}h+\partial_{\theta_0}\alpha+\bar\partial_{\theta_0}\overline\alpha,
	\end{equation}
	where $\bar\partial_{\theta_0}^*$ denotes the formal adjoint of $\bar\partial_{\theta_0}$ with respect to $g_0$.
\end{prop}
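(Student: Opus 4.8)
The plan is to first show that $\Omega$ is $\di_{\theta_0}$-exact, and then to decompose a primitive of $\Omega$ using Hodge theory in the twisted Dolbeault complex. Since $(\Omega,\theta_0)$ is lcK we have $\di_{\theta_0}\Omega=0$, so $\Omega$ represents a class in the Morse--Novikov group $H^2_{\theta_0}(M)$, and the first step is to see that this class vanishes.

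The key point is that $H^*_{\theta_0}(M)=0$ whenever $\theta_0$ is parallel and nonzero, which holds here because $(M,J,g_0,\Omega_0,\theta_0)$ is Vaisman. To see this, write $\di_{\theta_0}=\di-\theta_0\wedge\cdot$, whose formal adjoint is $\delta_{\theta_0}=\delta-T_0\lrcorner$, and compute the twisted Laplacian $\Delta_{\theta_0}:=\di_{\theta_0}\delta_{\theta_0}+\delta_{\theta_0}\di_{\theta_0}$. Using $\nabla\theta_0=0$ (so that $\cL_{T_0}=\nabla_{T_0}$ on forms) one obtains the three identities $\{\di,T_0\lrcorner\}=\nabla_{T_0}$, $\{\theta_0\wedge,\delta\}=-\nabla_{T_0}$ and $\{\theta_0\wedge,T_0\lrcorner\}=|\theta_0|^2$, which combine into the clean formula $\Delta_{\theta_0}=\Delta+|\theta_0|^2$. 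Since $|\theta_0|^2$ is a positive constant, $\Delta_{\theta_0}$ is a positive operator with trivial kernel in every degree, whence $H^*_{\theta_0}(M)=0$. In particular $\Omega=\di_{\theta_0}\tilde\eta$; applying this also to $\Omega=\overline{\Omega}$ and using that $\di_{\theta_0}$ is a real operator, we may replace $\tilde\eta$ by its real part and assume $\eta$ is a real $1$-form with $\Omega=\di_{\theta_0}\eta$.

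Next I would exploit the bidegree. Writing $\eta=\alpha'+\overline{\alpha'}$ with $\alpha':=\eta^{0,1}$ and using $\di_{\theta_0}=\partial_{\theta_0}+\bar\partial_{\theta_0}$, the $(2,0)$- and $(0,2)$-parts of $\Omega=\di_{\theta_0}\eta$ must vanish because $\Omega$ is of type $(1,1)$; this forces $\bar\partial_{\theta_0}\alpha'=0$ (and the conjugate relation) and leaves $\Omega=\partial_{\theta_0}\alpha'+\bar\partial_{\theta_0}\overline{\alpha'}$. Since $\bar\partial_{\theta_0}$ differs from $\bar\partial$ only by the zeroth-order term $\theta_0^{0,1}\wedge\cdot$, the Laplacian $\Delta_{\bar\partial_{\theta_0}}$ is elliptic and the Hodge decomposition applies to the complex $(\Omega^{0,\bullet}(M),\bar\partial_{\theta_0})$; as $\alpha'$ is $\bar\partial_{\theta_0}$-closed, this yields $\alpha'=\alpha+\bar\partial_{\theta_0}f$ with $\alpha\in\ker\Delta_{\bar\partial_{\theta_0}}$ of type $(0,1)$ and $f\in\mathcal{C}^\infty(M,\mC)$. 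Substituting, and using $\overline{\bar\partial_{\theta_0}f}=\partial_{\theta_0}\bar f$ together with $\partial_{\theta_0}\bar\partial_{\theta_0}=-\bar\partial_{\theta_0}\partial_{\theta_0}$, the two exact contributions collapse to
$$\partial_{\theta_0}\bar\partial_{\theta_0}f+\bar\partial_{\theta_0}\partial_{\theta_0}\bar f=\partial_{\theta_0}\bar\partial_{\theta_0}(f-\bar f)=2i\,\partial_{\theta_0}\bar\partial_{\theta_0}(\mathrm{Im}\,f)=\di_{\theta_0}\di^c_{\theta_0}(\mathrm{Im}\,f),$$
so that with $h:=\mathrm{Im}\,f$ we obtain $\Omega=\di_{\theta_0}\di^c_{\theta_0}h+\partial_{\theta_0}\alpha+\bar\partial_{\theta_0}\overline{\alpha}$, which is exactly \eqref{decomp}.

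I expect the main obstacle to be the vanishing $H^*_{\theta_0}(M)=0$: this is where the Vaisman hypothesis (parallelism of $\theta_0$) is genuinely used, and it is the only step requiring analysis rather than formal bidegree bookkeeping, resting on the Weitzenböck identity $\Delta_{\theta_0}=\Delta+|\theta_0|^2$. Once exactness is available, the type decomposition and the Hodge decomposition of $\alpha'$ (whose finite-dimensional harmonic space is guaranteed by ellipticity) are routine, and it is the reality of $\eta$ that ensures the potential $h$ can be taken real.
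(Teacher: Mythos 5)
Your argument is correct and follows the same route the paper indicates for this statement (the paper only cites Istrati's Theorem~6.2 and sketches the idea): vanishing of the Morse--Novikov cohomology for the parallel form $\theta_0$, hence $\di_{\theta_0}$-exactness of $\Omega$, then bidegree considerations and Hodge theory for the elliptic complex $(\Omega^{0,\bullet}(M),\bar\partial_{\theta_0})$. Your self-contained Weitzenb\"ock computation $\Delta_{\theta_0}=\Delta+|\theta_0|^2$ correctly reproduces the vanishing result of de Le\'on--L\'opez--Marrero--Padr\'on that the paper invokes via Corollary~\ref{vt}, and the remaining steps (reality of the primitive, the type decomposition forcing $\bar\partial_{\theta_0}\alpha'=0$, and the identity $\di_{\theta_0}\di^c_{\theta_0}=2i\partial_{\theta_0}\bar\partial_{\theta_0}$ producing the real potential $h=\mathrm{Im}\,f$) all check out.
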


The proof is based on standard Hodge theory, using the identification of the twisted Bott-Chern cohomology with some twisted Dolbeault cohomology, due to the vanishing of the Morse-Novikov cohomology on Vaisman manifolds (cf. Corollary \ref{vt} below).

\section{Deformations of Vaisman structures}\label{sectdeform}

In this section we consider two types of deformations of Vaisman structures, which have already been introduced in \cite{ts}. However, we present them from a different point of view that is useful for the purpose of this paper.

Recall first that the space of harmonic forms on the compact Vaisman manifold $(g_0, \Omega_0,\theta_0)$ decomposes as follows:
\begin{equation}\label{decv}\mathcal{H}^1(M,g_0)=\mathrm{span}\{\theta_0\}\oplus \mathcal{H}^1_0(M,g_0),
\end{equation} 
where $\mathcal{H}^1_0(M)$  is $J$-invariant and consists of harmonic 1-forms pointwise orthogonal to $\theta_0$ and $J\theta_0$.  In particular, the Cartan formula shows that $\cL_{T_0}\alpha=0$ and $\cL_{JT_0}\alpha=0$ for every $\alpha\in\mathcal{H}^1(M,g_0)$. For a proof, see for instance  \cite[Lemma 5.2]{mmp}.

If $(g_0,\Omega_0,\theta_0)$ is a Vaisman structure on $(M,J)$ with Lee vector field $T_0$, then for every positive real number $t>0$, the triple $(g'_0:=tg_0,\Omega'_0:=t\Omega_0,\theta_0)$ is a new Vaisman structure with the same Lee form, whose Lee vector field is $T'_0=\frac1t T_0$. The square norms of the Lee fields with respect to the respective metrics are related by 
\begin{equation}\label{tt}|T'_0|_{g'_0}^2=\theta_0(T'_0)=\frac1t\theta_0(T_0)=\frac1t|T_0|_{g_0}^2.\end{equation} 

\begin{defn} A Vaisman metric is called {\em normalized} if the Lee form (or, equivalently, the Lee vector field) has norm 1.
\end{defn}

From \eqref{tt} it follows that every Vaisman metric is proportional to a normalized Vaisman metric, which is unique in its homothety class. By \eqref{pv}, every normalized Vaisman metric has constant potential 1.

\begin{lemma}\label{lpc}
Let $(M,J, g_0, \Omega_0,\theta_0)$ be a compact Vaisman manifold and let $(g, \Omega,\theta)$ be any lcK structure on $(M,J)$. If $[\theta]=t[\theta_0]+[\alpha]$, with $t\in\mathbb{R}$ and $\alpha\in \mathcal{H}^1_0(M,g_0)$ denotes the decomposition of $[\theta]$ with respect to \eqref{decv}, then $t>0$.
\end{lemma}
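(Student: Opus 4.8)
The plan is to detect the sign of $t$ through a cohomological pairing, thereby reducing the statement to a single positivity, which is where the genuine difficulty lies. Throughout write $2n=\dim_{\mR}M$ and recall that $|\theta_0|$ is constant.

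\emph{Step 1: reduction to a positivity.} First I would introduce the $(2n-1)$-form $\mu:=J\theta_0\wedge\Omega_0^{n-1}$ and check that it is closed: using \eqref{djt}, the Vaisman identity $\di\Omega_0^{n-1}=(n-1)\theta_0\wedge\Omega_0^{n-1}$, and the pointwise relation $n\,\theta_0\wedge J\theta_0\wedge\Omega_0^{n-1}=|\theta_0|^2\Omega_0^n$, one finds $\di\mu=0$. Since $\theta$ is closed, $\int_M\theta\wedge\mu$ depends only on $[\theta]$ and $[\mu]$. For any $1$-form $\gamma$ one has $\gamma\wedge\mu=\tfrac1n\Lambda_{\Omega_0}(\gamma\wedge J\theta_0)\,\Omega_0^n$ with $\Lambda_{\Omega_0}(\gamma\wedge J\theta_0)=\langle J\gamma,J\theta_0\rangle_{g_0}=\langle\gamma,\theta_0\rangle_{g_0}$, so that $\int_M\gamma\wedge\mu=(n-1)!\int_M\langle\gamma,\theta_0\rangle_{g_0}\,dV_{g_0}$. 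Taking $\gamma=\alpha\in\mathcal{H}^1_0(M,g_0)$, which is pointwise orthogonal to $\theta_0$, gives $\int_M\alpha\wedge\mu=0$, while $\gamma=\theta_0$ gives $\int_M\theta_0\wedge\mu=\tfrac{|\theta_0|^2}{n}\int_M\Omega_0^n>0$. Pairing the decomposition $[\theta]=t[\theta_0]+[\alpha]$ with $[\mu]$ then yields
\begin{equation*}
\int_M\theta\wedge J\theta_0\wedge\Omega_0^{n-1}=t\,\frac{|\theta_0|^2}{n}\int_M\Omega_0^n,
\end{equation*}
so that $t>0$ is equivalent to the single inequality $I:=\int_M\theta\wedge J\theta_0\wedge\Omega_0^{n-1}>0$.

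\emph{Step 2: the positivity and the main obstacle.} The hypothesis that $\theta$ is the Lee form of a \emph{positive} Hermitian form $\Omega$ must now be used; the difficulty is that $I$ is built from the Vaisman form $\Omega_0$, whereas positivity is naturally available for $\Omega$. As a first, clean computation I would run the analogous Stokes argument with $\Omega^{n-1}$ in place of $\Omega_0^{n-1}$: integrating $\di(\Omega^{n-1}\wedge J\theta_0)$ and using $\di\Omega^{n-1}=(n-1)\theta\wedge\Omega^{n-1}$ gives $\int_M\Omega^{n-1}\wedge\di J\theta_0=-(n-1)\int_M\theta\wedge J\theta_0\wedge\Omega^{n-1}$, while \eqref{djt} rewrites the left-hand side as $-\int_M\bigl(|\theta_0|^2\Omega_0-\theta_0\wedge J\theta_0\bigr)\wedge\Omega^{n-1}$. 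As $|\theta_0|^2\Omega_0-\theta_0\wedge J\theta_0$ is a nontrivial positive semidefinite $(1,1)$-form and $\Omega^{n-1}$ is strictly positive, this integral is strictly negative, whence $\int_M\theta\wedge J\theta_0\wedge\Omega^{n-1}>0$. This establishes the \emph{$g$-analogue} $\int_M\langle\theta,\theta_0\rangle_{g}\,dV_{g}>0$ of the desired inequality.

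The remaining gap is exactly the replacement of $\Omega^{n-1}$ by $\Omega_0^{n-1}$: the quantity $\int_M\theta\wedge J\theta_0\wedge\Omega^{n-1}$ is not cohomological and differs from $I$ in general, and a direct wedge-algebra bookkeeping only controls $\int_M\theta\wedge J\theta_0\wedge\Omega^a\wedge\Omega_0^{b}$ for $a\geq 1$, never reaching the purely $\Omega_0$ term $I$. The way I would close this is to pass to the Kähler cover on which $\theta_0$ becomes exact: on a compact Vaisman manifold the Lee class has discrete periods, so there is a $\mZ$-cover on which $g_0$ lifts to a Kähler cone metric with radial homothety field corresponding to $T_0$, and $\theta(T_0)$ becomes the radial derivative of the logarithm of the lcK conformal factor $\Omega/\Omega_0$. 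Since $\Omega/\Omega_0$ is bounded above and below on the compact quotient while $\Omega$ is positive definite, I expect an exhaustion/maximum-principle argument on the cone to force the $g_0$-average of this radial derivative to be positive, i.e. $I>0$. Making this last reconciliation of the two metrics rigorous is the main obstacle of the proof.
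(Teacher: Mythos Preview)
Your Step~1 is correct, but the ``main obstacle'' you identify in Step~2 is self-inflicted: the detour through the closed form $\mu=J\theta_0\wedge\Omega_0^{n-1}$ forces you to compare two different metrics and leads to the dead end you describe. The paper's argument bypasses this entirely by never needing to land on $\Omega_0^{n-1}$.

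Here is what you are missing. First, make the conformal change so that $\theta=t\theta_0+\alpha$ exactly (not just in cohomology). Then the crucial point is that $\mathcal{H}^1_0(M,g_0)$ is $J$-invariant, so $J\alpha$ is again harmonic and in particular \emph{closed}; hence $\di J\theta=t\,\di J\theta_0$. Now run your own Step~2 Stokes computation with $J\theta$ in place of $J\theta_0$: on the one hand
\[
\int_M\di J\theta\wedge\Omega^{n-1}=t\int_M(\theta_0\wedge J\theta_0-|\theta_0|^2_{g_0}\Omega_0)\wedge\Omega^{n-1},
\]
and the integral on the right is $\le 0$ by the semidefiniteness you already noted. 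On the other hand, Stokes gives
\[
\int_M\di J\theta\wedge\Omega^{n-1}=-(n-1)\int_M\theta\wedge J\theta\wedge\Omega^{n-1}<0,
\]
since $\theta\wedge J\theta\wedge\Omega^{n-1}$ is pointwise nonnegative and $\theta\ne 0$. Thus $t>0$ directly---no K\"ahler cover, no maximum principle. The factor $t$ appears not through a cohomological pairing but through the identity $\di J\theta=t\,\di J\theta_0$, which is exactly where the hypothesis $\alpha\in\mathcal{H}^1_0(M,g_0)$ (rather than just $[\alpha]\in H^1_0$) gets used.
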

\begin{proof}
In the conformal class of $g$, there exists an lcK metric with Lee form equal to $t\theta_0+\alpha$. So  without loss of generality, we assume that $\theta=t\theta_0+\alpha$. As $\di J\alpha=0$, we get from \eqref{djt}
$$\di J\theta=t\di J\theta_0+\di J\alpha=t(\theta_0\wedge J\theta_0-|\theta_0|_{g_0}^2\Omega_0),$$ 
Let $n$ denote the complex dimension of $M$. Taking the wedge product with $\Omega^{n-1}$ and integrating over $M$ yields
$$\int_M\di J\theta\wedge\Omega^{n-1}=t\int_M(\theta_0 \wedge J\theta_0-|\theta_0|_{g_0}^2\Omega_0)\wedge \Omega^{n-1}, $$
which has the opposite sign of $t$, since $\int_M(\theta_0 \wedge J\theta_0-|\theta_0|_{g_0}^2\Omega_0)\wedge \Omega^{n-1}\le0$.
On the other hand, by Stokes' theorem, we have
$$\int_M\di J\theta\wedge\Omega^{n-1}=\int_M J\theta\wedge\di (\Omega^{n-1})=
-(n-1)\int_M \theta\wedge J\theta\wedge \Omega^{n-1}<0,$$ 
since $\theta\neq 0$. We thus deduce that $t>0$.
\end{proof}

\begin{lemma}\label{d1}
Let $(M,J, g_0, \Omega_0,\theta_0)$ be a Vaisman manifold. 
For every positive real number $t>0$ and harmonic $1$-form $\alpha\in \mathcal{H}_0^1(M,g_0)$, the pair
$(\Omega'_0,\theta'_0)$ defined by $\theta'_0:=t\theta_0+\alpha$ and $\Omega'_0:=\theta'_0\wedge J\theta'_0-\di J\theta'_0$ is a normalized Vaisman structure on $(M,J)$.
\end{lemma}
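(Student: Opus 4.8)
The plan is to check in turn the four features that define a normalized Vaisman structure: that $\theta'_0$ is closed and $\Omega'_0$ is a real $(1,1)$-form, that $(\Omega'_0,\theta'_0)$ satisfies the lcK equation, that the associated symmetric tensor is positive definite, and finally that the resulting metric is Vaisman with $|\theta'_0|=1$. The first two are formal. Since $\theta_0$ is closed and $\alpha$ is harmonic, $\theta'_0=t\theta_0+\alpha$ is closed; and since $\mathcal{H}^1_0(M,g_0)$ is $J$-invariant, $J\alpha$ is again harmonic, so $\di J\alpha=0$ and hence $\di J\theta'_0=t\,\di J\theta_0$, which is of type $(1,1)$ by \eqref{djt}. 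As $\theta'_0\wedge J\theta'_0$ is manifestly a real $(1,1)$-form, so is $\Omega'_0$, and it is therefore the fundamental form of a $J$-Hermitian symmetric tensor $g'_0:=\Omega'_0(\cdot,J\cdot)$. The lcK equation is then immediate: from $\di\theta'_0=0$ one gets $\di\Omega'_0=-\theta'_0\wedge\di J\theta'_0$, and this equals $\theta'_0\wedge\Omega'_0$ because $\theta'_0\wedge\theta'_0\wedge J\theta'_0=0$.

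The heart of the argument is the positivity of $g'_0$. Here I would substitute \eqref{djt} to remove $\di J\theta'_0$, expand $\theta'_0\wedge J\theta'_0$, and use the pointwise identity $(\beta\wedge J\beta)(\cdot,J\cdot)=\beta\otimes\beta+J\beta\otimes J\beta$, valid for any $1$-form $\beta$. Writing $g_0=g_0^{\mathcal{D}}+|\theta_0|^{-2}(\theta_0\otimes\theta_0+J\theta_0\otimes J\theta_0)$ for the splitting of $g_0$ relative to the $g_0$-orthogonal, $J$-invariant decomposition $TM=\mathrm{span}\{T_0,JT_0\}\oplus\mathcal{D}$, the cross terms in $t$ and $\alpha$ recombine into perfect squares and the computation collapses to
\begin{equation*}
g'_0=\theta'_0\otimes\theta'_0+J\theta'_0\otimes J\theta'_0+t|\theta_0|^2\,g_0^{\mathcal{D}}.
\end{equation*}
Positive-definiteness then follows by testing on $X=X_v+X_h$ with $X_v\in\mathrm{span}\{T_0,JT_0\}$ and $X_h\in\mathcal{D}$: for $X_h\neq0$ the last term is strictly positive since $t>0$, while for $X=X_v\neq0$ the forms $\alpha$ and $J\alpha$ annihilate $T_0,JT_0$, so $\theta'_0(X_v)^2+(J\theta'_0)(X_v)^2=t^2\big(\theta_0(X_v)^2+(J\theta_0)(X_v)^2\big)>0$. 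I expect this recombination-into-squares to be the only genuinely nontrivial step; the rest is bookkeeping.

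It remains to identify the Lee vector field and verify the Vaisman and normalization conditions. From the displayed identity one reads off that the $g'_0$-dual of $\theta'_0$ is $T'_0=\tfrac{1}{t|\theta_0|^2}T_0$, since $\alpha$ and $J\alpha$ kill $T_0,JT_0$ and so testing $g'_0(T'_0,\cdot)=\theta'_0(\cdot)$ against $T_0$, $JT_0$ and $\mathcal{D}$ reduces to scalar identities; in particular $|\theta'_0|^2_{g'_0}=\theta'_0(T'_0)=1$, so the structure is normalized. Finally $T'_0$ is a constant multiple of $T_0$, so it suffices to show $\cL_{T_0}g'_0=0$, which I would deduce termwise from the displayed formula: $\cL_{T_0}g_0=0$ because $g_0$ is Vaisman, $\cL_{T_0}\theta_0=0=\cL_{T_0}J\theta_0$ because $\theta_0$ is parallel of constant norm and $T_0$ is holomorphic, and $\cL_{T_0}\alpha=0=\cL_{T_0}J\alpha$ by the Cartan-formula remark following \eqref{decv}; hence $\cL_{T_0}g_0^{\mathcal{D}}=0$ as well. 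Therefore $\cL_{T'_0}g'_0=0$, so $T'_0$ is a nonzero Killing field and $(\Omega'_0,\theta'_0)$ is a normalized Vaisman structure.
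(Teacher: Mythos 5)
Your proof is correct and follows essentially the same route as the paper's: positivity is obtained by splitting $TM$ into $\mathrm{span}\{T_0,JT_0\}$ and its orthogonal complement and using that $\alpha$, $J\alpha$ annihilate $T_0$, $JT_0$ (you phrase this at the level of the symmetric tensor $g'_0$, the paper at the level of the two semi-positive $(1,1)$-forms with disjoint kernels), and the Lee field is identified as the constant multiple $\tfrac{1}{t|\theta_0|^2}T_0$ of $T_0$, whose Killing property follows from the $T_0$-invariance of all ingredients. The only cosmetic differences are that the paper first rescales to the normalized case $|\theta_0|=1$, and verifies the Killing condition via $\mathcal{L}_{T'_0}\Omega'_0=0$ together with holomorphy of $T'_0$ rather than directly on $g'_0$.
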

  
\begin{proof} 
Since $\theta_0$, $\mathcal{H}_0^1(M,g_0)$, and the expression of $\Omega'_0$ do not change if $\Omega_0$ is rescaled by a constant, one can assume that $(M,J, g_0, \Omega_0,\theta_0)$ is normalized. Then \eqref{djt} shows that $\di J\theta_0=\Omega_0-\theta_0\wedge J\theta_0$. Moreover $\di\alpha=\di J\alpha=0$, whence
\begin{equation}\label{op}
\Omega'_0=(t\theta_0+\alpha)
\wedge J (t\theta_0+\alpha)+ t(\Omega_0-\theta_0\wedge J\theta_0).
\end{equation}
Denoting as before by $T_0:=\theta_0^\sharp$ the metric dual of $\theta_0$ with respect to $g_0$, we have that the kernel of $\Omega_0-\theta_0\wedge J\theta_0$ is the span of $T_0$ and $JT_0$. On the other hand, 
$$[(t\theta_0+\alpha)
\wedge J (t\theta_0+\alpha)](aT_0+bJT_0,J(aT_0+bJT_0))=t^2(a^2+b^2)$$ since $\alpha$ vanishes pointwise on $T_0$ and $JT_0$. This shows that the right-hand side of \eqref{op}
is positive definite since it is the sum of two positive semi-definite (1,1)-forms which have no common kernel. Clearly $\di\Omega'_0=\theta'_0\wedge\Omega'_0$, so 
$(\Omega'_0,\theta'_0)$ is lcK.

Moreover, $T_0\lrcorner \Omega'_0= t J\theta'_0$, so $T'_0:=\frac{1}{t}T_0$ is the Lee vector field of $(\Omega'_0,\theta'_0)$. In particular, $T'_0$ is holomorphic, and furthermore $\cL_{T'_0}\Omega'_0=\frac{1}{t}\cL_{T_0}\Omega'_0=0$ since $\cL_{T_0}\alpha=0$, $\cL_{T_0} J=0$, and $\cL_{T_0} \theta_0=0$. Hence $T'_0$ is Killing with respect to $g'_0$, and thus $(\Omega'_0,\theta'_0)$ is Vaisman. Finally, $\theta'_0(T'_0)=t\theta_0+\alpha(\frac1t T_0)=\theta_0(T_0)=1$, so $(\Omega'_0,\theta'_0)$ is normalized.
\end{proof}

On a complex manifold $(M,J)$, the Vaisman structure $(\Omega'_0,\theta'_0)$ defined as in Lemma~\ref{d1} will be called a {\sl deformation of type I} of $(\Omega_0,\theta_0)$. 

A direct consequence of Lemmas \ref{lpc} and \ref{d1} is the following result:

\begin{proposition}\label{deformI}
The Lee form of any lcK structure on a Vaisman manifold is cohomologous to the Lee form of a Vaisman structure which is a deformation of type I of the initial Vaisman structure.
\end{proposition}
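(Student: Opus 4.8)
The plan is to combine the two preceding lemmas essentially verbatim, the only real content being the positivity supplied by Lemma~\ref{lpc}. Let $(g,\Omega,\theta)$ be an arbitrary lcK structure on the compact Vaisman manifold $(M,J,g_0,\Omega_0,\theta_0)$. First I would invoke Hodge theory to replace $\theta$ by the unique $g_0$-harmonic representative of its de Rham class $[\theta]$; this does not affect the statement, since cohomologousness is all that is asserted. Applying the orthogonal decomposition \eqref{decv}, this harmonic representative is written uniquely as $t\theta_0+\alpha$ with $t\in\mR$ and $\alpha\in\mathcal{H}^1_0(M,g_0)$.

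The next step, and the crux of the argument, is to apply Lemma~\ref{lpc} to the lcK structure $(g,\Omega,\theta)$: since $[\theta]=t[\theta_0]+[\alpha]$ is exactly the decomposition required there, we conclude $t>0$. This positivity is indispensable, as it is precisely the hypothesis under which the type~I deformation is defined.

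Finally, because $t>0$ and $\alpha\in\mathcal{H}^1_0(M,g_0)$, Lemma~\ref{d1} applies and produces a normalized Vaisman structure $(\Omega'_0,\theta'_0)$ with $\theta'_0=t\theta_0+\alpha$ and $\Omega'_0=\theta'_0\wedge J\theta'_0-\di J\theta'_0$, which by definition is a deformation of type~I of $(\Omega_0,\theta_0)$. Since $\theta'_0=t\theta_0+\alpha$ is exactly the harmonic representative of $[\theta]$ chosen above, we obtain $[\theta]=[\theta'_0]$, so $\theta$ is cohomologous to the Lee form of this type~I deformation, as required.

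The proof is therefore entirely formal once the two lemmas are in hand; there is no genuine obstacle beyond correctly matching the decomposition \eqref{decv} used in both lemmas and recognizing that the sole nontrivial input is the sign condition $t>0$ from Lemma~\ref{lpc}, whose proof (via integration against $\Omega^{n-1}$ and Stokes' theorem) has already been carried out.
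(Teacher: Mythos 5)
Your proof is correct and follows exactly the route the paper intends: the paper states Proposition~\ref{deformI} as ``a direct consequence of Lemmas~\ref{lpc} and~\ref{d1}'' without further detail, and your argument --- harmonic representative, decomposition \eqref{decv}, positivity of $t$ from Lemma~\ref{lpc}, then Lemma~\ref{d1} to produce the type~I deformation with Lee form $t\theta_0+\alpha$ --- is precisely that deduction spelled out.
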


Thus, if we denote by $H^1_{\mathrm{lcK}}(M)$ and by $H^1_{\mathrm{Vaisman}}(M)$   the set of all de Rham cohomology classes that are represented by the Lee form of an lcK structure, respectively of a Vaisman structure, then on a compact complex manifold of Vaisman type these sets are equal and are described as follows with respect to an arbitrarily fixed Vaisman structure $(g_0,\Omega_0, \theta_0)$:
\[ H^1_{\mathrm{lcK}}(M)=H^1_{\mathrm{Vaisman}}(M)=\{ t[\theta_0]+[\alpha]\in H^1_{\mathrm{dR}}(M, \mR)\, |\, t>0, \alpha \in\mathcal{H}_0^1(M,g_0)\}.\]

\begin{cor}\label{vt} If $\theta$ is the Lee form of some lcK structure on a Vaisman-type manifold $(M,J)$, the twisted cohomology $H^*_\theta(M)$ vanishes.
\end{cor}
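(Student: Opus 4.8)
The plan is to reduce the statement to the case where $\theta$ is the Lee form of a normalized Vaisman metric and then prove the vanishing directly by a Bochner-type argument. First I would use the conjugation relation $\mathrm{d}_{\theta+\mathrm{d}f}=e^f\mathrm{d}_\theta e^{-f}$ recorded before \eqref{reltwisteddiff}: it shows that multiplication by $e^f$ is an isomorphism of complexes $(\Omega^\bullet(M),\mathrm{d}_{\theta+\mathrm{d}f})\to(\Omega^\bullet(M),\mathrm{d}_\theta)$, so $H^*_\theta(M)$ depends only on the de Rham class $[\theta]$. Since $\theta$ is the Lee form of an lcK structure, Proposition~\ref{deformI} provides a Vaisman structure (a deformation of type I) whose Lee form $\theta'_0$ is cohomologous to $\theta$, and by Lemma~\ref{d1} this structure is normalized. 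Hence it suffices to treat the case $\theta=\theta_0$, the Lee form of a normalized Vaisman metric $g_0$, so that $\theta_0$ is parallel with $|\theta_0|_{g_0}=1$ and $T_0=\theta_0^\sharp$ is a unit Killing field satisfying $\iota_{T_0}\theta_0=1$.

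Next I would establish the homotopy identity
\begin{equation*}
\mathcal{L}_{T_0}=\mathrm{d}_{\theta_0}\iota_{T_0}+\iota_{T_0}\mathrm{d}_{\theta_0}+\mathrm{Id}
\end{equation*}
on $\Omega^\bullet(M)$. Substituting $\mathrm{d}_{\theta_0}=\mathrm{d}-\theta_0\wedge\cdot$ into the left-hand side and using the Cartan formula $\mathcal{L}_{T_0}=\mathrm{d}\iota_{T_0}+\iota_{T_0}\mathrm{d}$ together with the pointwise relation $\theta_0\wedge\iota_{T_0}\beta+\iota_{T_0}(\theta_0\wedge\beta)=(\iota_{T_0}\theta_0)\,\beta=\beta$, the identity follows immediately (and, notably, it requires only the normalization $\iota_{T_0}\theta_0=1$, not any invariance of $\theta_0$).

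I would then invoke twisted Hodge theory: because $\mathrm{d}_{\theta_0}$ has the same principal symbol as $\mathrm{d}$, the complex $(\Omega^\bullet(M),\mathrm{d}_{\theta_0})$ is elliptic, and on the compact manifold $M$ each group $H^k_{\theta_0}(M)$ is isomorphic to the space of twisted-harmonic forms $\ker\Delta_{\theta_0}|_{\Omega^k}$, that is, forms $\beta$ with $\mathrm{d}_{\theta_0}\beta=0$ and $\mathrm{d}_{\theta_0}^*\beta=0$, where $\mathrm{d}_{\theta_0}^*$ is the formal $L^2$-adjoint with respect to $g_0$. For such $\beta$ the homotopy identity gives $\mathcal{L}_{T_0}\beta=\mathrm{d}_{\theta_0}\iota_{T_0}\beta+\beta$, so that
\begin{equation*}
\langle \mathcal{L}_{T_0}\beta,\beta\rangle_{L^2}=\langle \iota_{T_0}\beta,\mathrm{d}_{\theta_0}^*\beta\rangle_{L^2}+\|\beta\|_{L^2}^2=\|\beta\|_{L^2}^2.
\end{equation*}
On the other hand, since $T_0$ is Killing, $\mathcal{L}_{T_0}$ preserves the metric and the volume form, hence is skew-adjoint on $L^2(\Lambda^\bullet M)$, which forces $\langle \mathcal{L}_{T_0}\beta,\beta\rangle_{L^2}=0$. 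Comparing the two expressions yields $\beta=0$, and therefore $H^*_{\theta_0}(M)=0$.

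The argument is essentially self-contained, and I expect the only delicate point to be the reduction step: the vanishing is genuinely a statement about a parallel, unit-norm representative of $[\theta]$, and the whole force of the corollary lies in the fact that such a representative exists, which is precisely what Proposition~\ref{deformI} together with the normalization of Lemma~\ref{d1} guarantees. Once this is in place, the analytic core—the interplay between the homotopy identity and the skew-adjointness of the Lie derivative along a Killing field—is a short computation, and the elliptic Hodge theory for $(\Omega^\bullet,\mathrm{d}_{\theta_0})$ is standard.
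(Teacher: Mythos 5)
Your proof is correct, and the reduction step coincides with the paper's: Proposition~\ref{deformI} (built on Lemmas~\ref{lpc} and~\ref{d1}) replaces $\theta$ by a cohomologous Lee form $\theta_0$ of a normalized Vaisman structure, and the conjugation $\di_{\theta+\di f}=e^f\di_\theta e^{-f}$ shows that $e^f\cdot$ induces an isomorphism $H^*_\theta(M)\cong H^*_{\theta_0}(M)$. Where you genuinely diverge is in the last step: the paper disposes of the vanishing of $H^*_{\theta_0}(M)$ for a parallel non-vanishing $1$-form by citing \cite{llmp}, whereas you prove it. Your three ingredients all check out: the homotopy identity $\cL_{T_0}=\di_{\theta_0}\iota_{T_0}+\iota_{T_0}\di_{\theta_0}+\mathrm{Id}$ follows from Cartan's formula and $\iota_{T_0}(\theta_0\wedge\beta)=(\iota_{T_0}\theta_0)\beta-\theta_0\wedge\iota_{T_0}\beta$ with $\iota_{T_0}\theta_0=|\theta_0|^2=1$; the twisted complex is elliptic (same symbol as $\di$), so on compact $M$ each class has a harmonic representative $\beta$ with $\di_{\theta_0}\beta=\di_{\theta_0}^*\beta=0$; and $\cL_{T_0}$ is skew-adjoint on $L^2$ because the flow of the Killing field $T_0$ acts by isometries, forcing $0=\langle\cL_{T_0}\beta,\beta\rangle=\|\beta\|^2$. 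This is in substance the standard proof of the de Le\'on--L\'opez--Marrero--Padr\'on theorem, so nothing is gained in generality, but your version is self-contained and makes the mechanism of the vanishing explicit (only parallelism and unit norm of $\theta_0$ are used, not the complex structure), at the cost of reproving a result the paper is content to quote.
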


\begin{proof}  By Proposition \ref{deformI}, $\theta$ is cohomologous to the Lee form $\theta_0$ of a Vaisman structure on $(M,J)$. Since the twisted cohomology only depends on the cohomology class of the $1$-form, we have $H^*_\theta(M)=H^*_{\theta_0}(M)$. On the other hand, it was shown in \cite{llmp} that the twisted cohomology $H^*_{\theta_0}(M)$ vanishes whenever $\theta_0$ is non-vanishing and parallel with respect to some Riemannian metric on $M$.
\end{proof}

We now consider another type of deformation of Vaisman structures, which preserves the cohomology class of the Lee form. Let $(g_0, \Omega_0,\theta_0)$ be a Vaisman structure on $(M,J)$ with Lee vector field $T_0$. Let $f\in C^\infty(M)$, such that $T_0(f)=JT_0(f)=0$. We define the closed $1$-form $\theta'_0$ and the $(1,1)$-form $\Omega'_0$ by
\begin{eqnarray}
\theta'_0 &= & \theta_0+\di f,\\
\Omega'_0 &= & \di_{\theta'_0} (-J\theta'_0)=|\theta_0|_{g_0}^2\Omega_0+\theta_0\wedge J\di f+\di f\wedge J\theta_0+\di f\wedge J\di f -\di\di^c f.
\end{eqnarray}
When $\Omega'_0$ is a positive $(1,1)$-form (for instance if $f$ is close to $0$ in the $C^2$ sense), the pair $(\Omega'_0,\theta'_0)$ is an lcK structure on $(M,J)$.

Moreover, we have $T_0\lrcorner\Omega'_0=|\theta_0|_{g_0}^2J\theta_0+|\theta_0|_{g_0}^2J\di f=|\theta_0|_{g_0}^2J\theta'_0$, since $T_0\lrcorner \di f=T_0\lrcorner J\di f=0$ and $T_0\lrcorner\di\di^cf=\cL_{T_0}\di^c f=\di^c\cL_{T_0} f=0$. Therefore, the Lee vector field of $(\Omega'_0,\theta'_0)$ is $T'_0=\frac1{|\theta_0|_{g_0}^2}T_0$. Since $\cL_{T_0}J=0$ and $\cL_{T_0}\Omega'_0=0$, and $\theta'_0(T'_0)=1$, the structure $(\theta'_0,\Omega'_0)$ is normalized Vaisman and we will call it a {\sl deformation of type II} of $(\Omega_0,\theta_0)$.

The next result is well-known (cf. \cite{ts}); we provide here a short proof for convenience.

\begin{proposition}\label{proportionalLee}
Let $(g_0, \Omega_0,\theta_0)$ and $(g'_0,\Omega'_0,\theta'_0)$ be two Vaisman structures on a compact complex manifold $(M,J)$, with Lee vector fields $T_0$, respectively $T_0'$. Then there exists a positive constant $\lambda>0$, such that $T'_0=\lambda T_0$. 
\end{proposition}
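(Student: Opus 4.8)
The plan is to show that the Lee vector fields of two Vaisman structures on a compact complex manifold must be proportional, with positive proportionality factor. First I would recall that by a result cited earlier (the original statement from \cite{ts} or via \cite[Lemma 3]{mmo}), the Lee vector field of any Vaisman structure is holomorphic and Killing with respect to its own metric. My strategy is to reduce the claim to showing that $T_0$ and $T_0'$ are pointwise proportional as sections of $TM$, and then that the proportionality factor is a positive constant. The key geometric input is that on a compact Vaisman manifold the Lee field generates (together with $JT_0$) a canonical foliation, and both structures must be compatible with the complex structure $J$, which is fixed.

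The main steps are as follows. First I would observe that since both $\theta_0$ and $\theta_0'$ are closed and both metrics are lcK for the \emph{same} $J$, the relation \eqref{djt} holds for each: $\di J\theta_0=\theta_0\wedge J\theta_0-|\theta_0|_{g_0}^2\Omega_0$ and similarly for the primed structure. Next, by Proposition~\ref{deformI} (or directly by Lemma~\ref{lpc}) the cohomology classes satisfy $[\theta_0']=t[\theta_0]+[\alpha]$ with $t>0$ and $\alpha\in\mathcal{H}_0^1(M,g_0)$; up to a conformal change and the deformations of type~I and~II described above, I can normalize both structures and arrange that their Lee forms are cohomologous, even equal as forms, after suitable rescaling. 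Then I would use that $JT_0$ and $JT_0'$ are both holomorphic Killing fields whose flows preserve $J$; comparing the two expressions for $\di J\theta_0$ pulled back along these flows, together with the fact that a normalized Vaisman structure is determined up to scale by its Lee form via $\Omega_0=\tfrac{1}{|\theta_0|^2}(\theta_0\wedge J\theta_0-\di J\theta_0)$, forces the two Lee one-forms to be proportional by a constant. Dualizing and accounting for the different metrics then yields $T_0'=\lambda T_0$.

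The hard part, I expect, will be controlling the constancy of the proportionality factor: pointwise proportionality of $T_0$ and $T_0'$ would follow relatively directly from the foliation structure, but upgrading this to a \emph{globally constant} factor requires genuinely using compactness. The natural tool is integration against $\Omega^{n-1}$ combined with Stokes' theorem, exactly as in the proof of Lemma~\ref{lpc}; the integral identities there pin down the scalar $t$ and rule out a non-constant function as the ratio. I anticipate that the cleanest route is to reduce everything to the normalized case, where both $|\theta_0|_{g_0}$ and $|\theta_0'|_{g_0'}$ are constant equal to $1$, and then show that the difference $\theta_0'-\lambda\theta_0$ is an exact form lying in $\mathcal{H}_0^1$, hence zero, for the appropriate constant $\lambda>0$. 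An alternative, perhaps shorter, argument would exploit that the identity component of the isometry group acts by rotating the canonical foliation and that any two invariant Lee fields tangent to this one-dimensional structure differ by a constant scalar; I would present whichever of these makes the positivity of $\lambda$ most transparent, since $\lambda>0$ is exactly the content of Lemma~\ref{lpc} applied with $g=g_0'$.
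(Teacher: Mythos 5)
Your reduction to the normalized, cohomologous case (via Lemma~\ref{lpc} and the type~I deformation) matches the paper's first step, and the positivity of the constant is indeed handled there. However, the core of the argument is missing: you never actually establish that $T_0'$ lies pointwise in the span of $T_0$ and $JT_0$. Your appeal to ``the foliation structure'' is circular, because the canonical distribution $\mathcal{T}\oplus J\mathcal{T}$ is only well defined \emph{as a consequence} of this proposition --- a priori each Vaisman structure carries its own two-plane field spanned by its own Lee field, and the whole content of the statement is that these coincide. The paper's key device, which your sketch does not contain and which is not a routine Stokes computation of the Lemma~\ref{lpc} type, is to observe that the $(n,n)$-form $(-1)^n\,\di J\theta_0\wedge(\di J\theta_0')^{n-1}$ is both exact and pointwise semi-positive, hence vanishes identically by Stokes; contracting with $T_0'$ and $JT_0'$ then forces $\di J\theta_0(T_0',JT_0')=0$, which places $T_0'$ in $\mathrm{span}\{T_0,JT_0\}$ with coefficients that are constant by holomorphicity and compactness. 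Integrating $\di J\theta\wedge\Omega^{n-1}$ as in Lemma~\ref{lpc} only controls a single cohomological scalar and cannot rule out a non-constant pointwise ratio.

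Moreover, your proposed ``cleanest route'' --- showing that $\theta_0'-\lambda\theta_0$ is exact and harmonic, hence zero --- proves a false statement. Two normalized Vaisman structures need not have proportional Lee \emph{forms}: a deformation of type~II replaces $\theta_0$ by $\theta_0+\di f$ with $f$ a non-constant basic function, producing a genuinely different normalized Vaisman structure whose Lee vector field is nevertheless proportional to $T_0$. The conclusion of the proposition concerns only the metric duals, taken with respect to the two different metrics, and the exact form $\di f$ separating the two Lee forms is in general neither zero nor harmonic. The correct endgame, after the pointwise span argument, is to show $T_0(f)=JT_0(f)=0$ via the Lie derivative identities $\mathcal{L}_{T_0'}\theta_0'=\mathcal{L}_{JT_0'}\theta_0'=0$, identifying the second structure as a type~II deformation of the first.
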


\begin{proof}
Since a constant rescaling of the fundamental form induces a constant rescaling of the Lee vector field, one can assume that the two Vaisman structures are normalized.
The closed $1$-form $\theta'_0$ is cohomologous to a harmonic $1$-form with respect to the metric $g_0$. By  \eqref{decv}, there exist $t\in\mR$  and $\alpha\in \mathcal{H}^1_0(M,g)$ such that  $[\theta'_0]=[t\theta_0+\alpha]$. Lemma~\ref{lpc} then yields that the real number $t$ is positive, and by Lemma \ref{d1} it follows that $t\theta_0+\alpha$ is the Lee form of a normalized Vaisman structure, whose Lee vector field is $\frac{1}{t}T_0$. 

Thus, without loss of generality, we may assume that $[\theta'_0]=[\theta_0]$, so there exists $f\in C^\infty(M)$, such that $\theta'_0=\theta_0+\di f$.
The $(n,n)$-form $(-1)^n\di J\theta_0\wedge (\di J\theta'_0)^{n-1}$ is exact and semi-positive, hence
\begin{equation}\label{van}\di J\theta_0\wedge (\di J\theta'_0)^{n-1}=0
\end{equation}
by Stokes' formula. The interior product of $\di J\theta'_0$ with $T'_0$ and $JT'_0$ vanishes, and $(\di J\theta'_0)^{n-1}$ is nowhere vanishing. Thus, taking the interior product with $T'_0$ and $JT'_0$ in \eqref{van} yields
$$ 0=\di J\theta_0(T_0',JT_0'),$$
whence $T'_0=cT_0+c'JT_0$, for some functions $c$ and $c'$, which have to be constant, since $T_0$ and $T'_0$ are holomorphic and $M$ is compact. Moreover, 
\begin{gather*}
0=\cL_{T'_0}\theta'_0=c\cL_{T_0}\theta'_0+c'\cL_{JT_0}\theta'_0=c\di(T_0(f))+c'\di (JT_0(f)),\\
0=\cL_{JT'_0}\theta'_0=c\cL_{JT_0}\theta'_0-c'\cL_{T_0}\theta'_0=-c'\di(T_0(f))+c\di (JT_0(f)).
\end{gather*}
hence $T_0(f)=JT_0(f)=0$. Therefore $(\Omega'_0,\theta'_0)$ is obtained from $(\Omega_0,\theta_0)$ by a deformation of type II, which implies that $T_0=sT'_0$ for some positive real number $s$.
\end{proof}

Proposition~\ref{proportionalLee} allows to identify on a compact complex manifold of Vaisman type $(M, J)$ two naturally oriented 1-dimensional distributions, $\mathcal{T}$ spanned by $T_0$ and $J\mathcal{T}$ spanned by $JT_0$, where $T_0$ is the Lee vector field of some Vaisman structure on $(M,J)$, which also determines the orientation.  The $2$-dimensional distribution $\mathcal{T}\oplus J\mathcal{T}$ is called the \emph{canonical distribution}. We denote by $\mathcal{T}^+$ the subset of $\mathcal{T}$ consisting of the union of all positive half-lines in $\mathcal{T}$, namely $\mathcal{T}^+:=\underset{p\in M}{\cup} \mR^+T_0(p)$ and correspondingly we denote by $J\mathcal{T}^+:=\underset{p\in M}{\cup} \mR^+JT_0(p)$. Proposition~\ref{proportionalLee} ensures that $\mathcal{T}^+$ and $J\mathcal{T}^+$ are well-defined, independently of the choice of the Vaisman structure.

Moreover, the proof of Proposition~\ref{proportionalLee} actually shows the following:

\begin{proposition}
Let $(M,J, \Omega_0, \theta_0)$ be a compact Vaisman manifold. Then any normalized Vaisman structure $(\Omega'_0,\theta'_0)$ on $(M,J)$ is obtained by deformations of type I and II starting from the given Vaisman structure $(\Omega_0,\theta_0)$.
\end{proposition}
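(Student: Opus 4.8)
The plan is to read off the statement directly from the proof of Proposition~\ref{proportionalLee}, reorganizing that argument so as to exhibit the passage from $(\Omega_0,\theta_0)$ to $(\Omega'_0,\theta'_0)$ as a deformation of type I followed by a deformation of type II. Since a constant rescaling only rescales the Lee vector field, I may assume throughout that $(\Omega'_0,\theta'_0)$ is normalized (as it is by hypothesis).

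First I would use the Hodge decomposition \eqref{decv}: the closed $1$-form $\theta'_0$ is cohomologous to a $g_0$-harmonic representative, so $[\theta'_0]=t[\theta_0]+[\alpha]$ for some $t\in\mR$ and $\alpha\in\mathcal{H}^1_0(M,g_0)$. Lemma~\ref{lpc} forces $t>0$, and then Lemma~\ref{d1} shows that $t\theta_0+\alpha$ is the Lee form of a normalized Vaisman structure $(\Omega''_0,\theta''_0)$ which is, by definition, a deformation of type I of $(\Omega_0,\theta_0)$. Denote by $T''_0$ its Lee vector field. By construction $[\theta'_0]=[\theta''_0]$, so there exists $f\in C^\infty(M)$ with $\theta'_0=\theta''_0+\di f$.

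It then remains to check that $(\Omega'_0,\theta'_0)$ is a deformation of type II of $(\Omega''_0,\theta''_0)$, and by the definition of such a deformation this amounts precisely to verifying the invariance condition $T''_0(f)=JT''_0(f)=0$. This is exactly the computation carried out in the second half of the proof of Proposition~\ref{proportionalLee}, now applied with $(\Omega''_0,\theta''_0)$ in the role of the base structure: the $(n,n)$-form $(-1)^n\di J\theta''_0\wedge(\di J\theta'_0)^{n-1}$ is exact and semi-positive, hence vanishes by Stokes; contracting with $T'_0$ and $JT'_0$ (which annihilate $\di J\theta'_0$) yields $\di J\theta''_0(T'_0,JT'_0)=0$, so $T'_0=cT''_0+c'JT''_0$ with $c,c'$ constant by holomorphicity and compactness; finally the identities $\cL_{T'_0}\theta'_0=\cL_{JT'_0}\theta'_0=0$ give $T''_0(f)=JT''_0(f)=0$.

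I expect no genuinely new obstacle here: the content is entirely present in Proposition~\ref{proportionalLee}, and the only point requiring attention is the bookkeeping observation that the type-II construction is insensitive to which normalized Vaisman structure in the given cohomology class is taken as base, so that running the argument with $(\Omega''_0,\theta''_0)$ instead of $(\Omega_0,\theta_0)$ introduces no difficulty. The mild subtlety worth stating explicitly is that the invariance hypothesis $T''_0(f)=JT''_0(f)=0$ demanded by the definition of a deformation of type II is not assumed but \emph{deduced}, which is what makes the decomposition into the two deformation types exhaustive.
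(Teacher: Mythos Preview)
Your proposal is correct and matches the paper's own treatment: the paper does not give a separate proof but simply notes that ``the proof of Proposition~\ref{proportionalLee} actually shows'' this proposition, and you have correctly unpacked that remark by exhibiting the passage $(\Omega_0,\theta_0)\to(\Omega''_0,\theta''_0)\to(\Omega'_0,\theta'_0)$ as a type I deformation followed by a type II deformation, with the key invariance $T''_0(f)=JT''_0(f)=0$ deduced exactly as in the second half of that proof. The only point you might make more explicit is that, since $(\Omega'_0,\theta'_0)$ is normalized, its fundamental form is automatically $\theta'_0\wedge J\theta'_0-\di J\theta'_0$, which is precisely the output of the type II construction applied to $(\Omega''_0,\theta''_0)$ with the function $f$; this is why no separate argument for the equality of fundamental forms is needed.
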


We now introduce the subspace $\cL\subset \Omega^1(M)$ of Lee forms of Vaisman structures, defined by
\[\cL:=\{\theta\in\Omega^1(M)\, |\, \di\theta=0\ \text{and}\ (\Omega:=\theta\wedge J\theta -  \di(J\theta),\theta) \text{ is a Vaisman structure}\},\]
which is tautologically in bijection with the set of normalized Vaisman structures on $(M,J)$.

\begin{lemma}\label{setL}
	Let $(M,J)$ be a Vaisman-type manifold. Then $\cL$
	is a convex cone inside $\Omega^1(M)$.	
\end{lemma}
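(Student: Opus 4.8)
The plan is to show that $\cL$ is closed under addition and under multiplication by positive scalars, and that it contains no nontrivial subspaces (which, together with convexity under these two operations, yields that it is a convex cone). The positive-scalar invariance is immediate: if $\theta\in\cL$, then scaling $\theta\mapsto s\theta$ with $s>0$ corresponds, via the tautological bijection, to the deformation of type I with parameter $t=s$ and harmonic part $\alpha=0$, applied to the normalized Vaisman structure associated to $\theta$; Lemma~\ref{d1} guarantees that the result is again a normalized Vaisman structure, so $s\theta\in\cL$.

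The heart of the matter is additivity: given $\theta_1,\theta_2\in\cL$, I must show $\theta_1+\theta_2\in\cL$, i.e. that $\Omega:=(\theta_1+\theta_2)\wedge J(\theta_1+\theta_2)-\di(J(\theta_1+\theta_2))$ is a positive-definite $(1,1)$-form defining a Vaisman structure. First I would record that each $\theta_i$ is the Lee form of a normalized Vaisman structure $(\Omega_i,\theta_i)$, so by Proposition~\ref{proportionalLee} their Lee vector fields are positive multiples of a common vector field; this pins down the canonical distribution $\mathcal{T}\oplus J\mathcal{T}$ uniformly. The key structural identity is that $\di(J\theta_i)$ is, up to sign, the $(1,1)$-form $\Omega_i-\theta_i\wedge J\theta_i$ appearing in \eqref{djt}, which is positive semi-definite with kernel exactly $\mathcal{T}\oplus J\mathcal{T}$. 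Expanding $\Omega$ bilinearly, the cross terms must be organized so that $\Omega$ is exhibited as a sum of positive semi-definite $(1,1)$-forms whose kernels intersect trivially — precisely the argument used in the proof of Lemma~\ref{d1}, where $(t\theta_0+\alpha)\wedge J(t\theta_0+\alpha)$ is positive definite on $\mathcal{T}\oplus J\mathcal{T}$ while $\Omega_0-\theta_0\wedge J\theta_0$ is positive definite on its complement. Once positivity of $\Omega$ is established, the Vaisman property follows as in Lemma~\ref{d1}: the common Lee vector field remains holomorphic and Killing, because $\cL_{T}$ annihilates all the building blocks.

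The main obstacle is the additivity of positivity: unlike in Lemma~\ref{d1}, where one summand came from the fixed Vaisman form $\Omega_0$, here both $\theta_1$ and $\theta_2$ are only assumed to lie in $\cL$, so I cannot directly write one term as a manifestly positive multiple of $\Omega_0$. The clean way around this is to work with respect to a fixed background normalized Vaisman structure $(\Omega_0,\theta_0)$ and to decompose each $\theta_i$ via \eqref{decv} as $\theta_i = t_i\theta_0+\alpha_i$ with $t_i>0$ (using Lemma~\ref{lpc}) and $\alpha_i\in\mathcal{H}^1_0(M,g_0)$. The class $[\theta_1+\theta_2]$ then decomposes with positive $\theta_0$-component $t_1+t_2>0$ and harmonic part $\alpha_1+\alpha_2\in\mathcal{H}^1_0(M,g_0)$. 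However, $\theta_1+\theta_2$ need not equal $(t_1+t_2)\theta_0+(\alpha_1+\alpha_2)$ on the nose — only cohomologously — so I would need the analogue of the deformation-of-type-II argument (Proposition~\ref{proportionalLee}) to transfer positivity and the Vaisman property from the harmonic representative to $\theta_1+\theta_2$ itself, exploiting that any two Vaisman structures with cohomologous Lee forms differ by a type-II deformation governed by a function $f$ with $T_0(f)=JT_0(f)=0$.

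Finally, to conclude that $\cL$ is a genuine cone (and not, say, a linear subspace) I would observe that $0\notin\cL$ since a Vaisman structure requires $\theta\neq0$, and that $\cL$ cannot contain both $\theta$ and $-\theta$: by Lemma~\ref{lpc} every element of $\cL$ has strictly positive $\theta_0$-component $t>0$ in the decomposition \eqref{decv}, whereas $-\theta$ would have component $-t<0$. Together with the closure under positive scaling and addition, this establishes that $\cL$ is a convex cone inside $\Omega^1(M)$, as claimed.
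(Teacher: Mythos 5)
Your cone step (positive rescaling realized as a type-I deformation) is correct and matches the paper. The genuine gap is in the additivity step. You correctly isolate the crux --- positivity of $\Omega:=(\theta_1+\theta_2)\wedge J(\theta_1+\theta_2)-\di J(\theta_1+\theta_2)$ --- and you even sketch the right mechanism (exhibit $\Omega$ as a sum of positive semi-definite $(1,1)$-forms whose kernels, identified via Proposition~\ref{proportionalLee}, intersect trivially). But you then abandon this in favour of a cohomological detour: decompose $[\theta_i]=t_i[\theta_0]+[\alpha_i]$ via \eqref{decv}, apply Lemma~\ref{d1} to the harmonic representative $(t_1+t_2)\theta_0+\alpha_1+\alpha_2$, and ``transfer positivity'' to $\theta_1+\theta_2$ through a type-II deformation. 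That transfer does not exist: a type-II deformation with datum $f$ produces a Vaisman structure \emph{only when the resulting $(1,1)$-form happens to be positive definite} (the paper explicitly restricts to $f$ small in $C^2$ for exactly this reason), and Proposition~\ref{proportionalLee} runs in the opposite direction --- it assumes both structures are already Vaisman. Being cohomologous to a Vaisman Lee form via an $f$ with $T_0(f)=JT_0(f)=0$ does not place a $1$-form in $\cL$: the term $-\di\di^c f$ can destroy positivity for large $f$. So your plan reduces the positivity claim to itself.

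The paper closes this step by a direct pointwise computation, with no background Vaisman structure and no cohomological decomposition: expanding gives $\Omega=\Omega_1+\Omega_2+\theta_1\wedge J\theta_2+\theta_2\wedge J\theta_1$, and evaluating on $(X,JX)$ in coframes of $g_1$ and $g_2$ adapted to $\{\theta_i,J\theta_i\}$ collapses everything into the sum of squares $(\theta_1(X)+\theta_2(X))^2+(J\theta_1(X)+J\theta_2(X))^2+\sum_i\bigl(\alpha_i(X)^2+\beta_i(X)^2\bigr)$. Semi-positivity is then immediate, and definiteness follows because a null vector $X$ must lie in $\mathrm{span}\{T_1,JT_1\}=\mathrm{span}\{T_2,JT_2\}$ (Proposition~\ref{proportionalLee}, with $T_1=\lambda T_2$, $\lambda>0$) while also satisfying $\theta_1(X)+\theta_2(X)=0$ and $J\theta_1(X)+J\theta_2(X)=0$, which forces $X=0$. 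Your closing paragraph about $0\notin\cL$ and $-\theta\notin\cL$ is harmless but unnecessary: ``convex cone'' here only requires closure under addition and positive scaling.
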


\begin{proof}
	Let $\theta$ be the Lee form of a Vaisman structure $(\Omega,\theta)$ on $(M,J)$. For any $t>0$, the $1$-form $t\theta$ is the Lee form of the Vaisman structure $(t^2\theta\wedge J\theta-t\mathrm{d} J\theta,t\theta)$ obtained from $(\Omega,\theta)$ by a deformation of type I. Thus, $\cL$ is a cone inside $\Omega^1(M)$.
	
	We now show that $\cL$ is also a  convex set. Let $\theta_1, \theta_2\in \cL$. Thus $(\Omega_1:=\theta_1\wedge J\theta_1 -  \di(J\theta_1), \theta_1)$ and $(\Omega_2:=\theta_2\wedge J\theta_2 -  \di(J\theta_2), \theta_2)$ are two normalized Vaisman structures on $(M,J)$ with associated Riemannian metrics $g_1=\Omega_1(\cdot,J\cdot)$ and $g_2=\Omega_2(\cdot,J\cdot)$. We need to check that $\theta:=\theta_1+\theta_2$ also belongs to $\cL$. We define:
	\[ \Omega:=\theta\wedge J\theta-\mathrm{d} J\theta=(\theta_1+\theta_2)\wedge(J\theta_1+J\theta_2)- (\mathrm{d}J\theta_1+\mathrm{d}J\theta_2)=\Omega_1+\Omega_2+\theta_1\wedge J\theta_2+\theta_2\wedge J\theta_1\]
which is clearly a $(1,1)$-form. We claim that the symmetric tensor $g:=\Omega(\cdot,J\cdot)$ is positive definite.

	For any vector field $X$ on $M$ the following identity holds:
	\begin{equation}\label{eqX}
	\Omega(X,JX)=\Omega_1(X,JX)+\Omega_2(X,JX)+2\theta_1(X)\theta_2(X)+2J\theta_1(X)J\theta_2(X).
	\end{equation}
	
Using some local orthonormal bases $\{\theta_1, J\theta_1, \alpha_1, \dots, \alpha_{n-2}\}$ and $\{\theta_1, J\theta_1, \beta_1, \dots, \beta_{n-2}\}$ of $(\Lambda^1(M), g_1)$ and $(\Lambda^1(M), g_2)$ respectively, \eqref{eqX} becomes:
	\begin{equation}\label{eqX2}
	\Omega(X,JX)=(\theta_1(X)+\theta_2(X))^2+(J\theta_1(X)+J\theta_2(X))^2+\sum_{i=1}^{n-2}(\alpha_i(X)^2+\beta_i(X)^2).
	\end{equation}
	Thus, $\Omega$ is positive semi-definite. In order to show that $\Omega$ is actually positive definite, assume that $\Omega(X,JX)=0$ for some vector $X$. If we denote by $T_1$ and $T_2$ the Lee vector fields of the Vaisman structures $(\Omega_1, \theta_1)$, respectively $(\Omega_2, \theta_2)$, then the above equation yields that $X$ lies in the span of $\{T_1, JT_1\}$, respectively of $\{T_2, JT_2\}$. 
	
	By Proposition~\ref{proportionalLee}, there exists a positive constant $\lambda$, such that $T_1=\lambda T_2$. Thus, $X$  is of the following form: $X=aT_1+bJT_1=\lambda(aT_2+bJT_2)$. The assumption $\Omega(X,JX)=0$ together with \eqref{eqX2} also implies that $\theta_1(X)+\theta_2(X)=0$ and $J\theta_1(X)+J\theta_2(X)=0$. Hence, we get that $a(\lambda+1)=0$ and $b(\lambda+1)=0$. As $\lambda>0$, it follows that $a=b=0$ and thus $X=0$, showing that $\Omega$ is positive definite and $(\Omega, \theta)$ is a Vaisman structure on $(M,J)$. Let us remark that the Lee vector field $T$ of this Vaisman structure is $T=\frac{1}{1+\lambda} T_1=\frac{\lambda}{1+\lambda} T_2$.
%
\end{proof}

\section{LcK structures with holomorphic Lee vector field}\label{mainsect}

This section contains the main result of the paper, namely the description of all lcK structures with holomorphic Lee vector field on a compact Vaisman-type complex manifold. 

Let $(M,J)$ be  a compact complex manifold of Vaisman type, and let $\mathcal{T}$ denote as before the 1-dimensional distribution spanned by the Lee vector field of any Vaisman structure on $(M,J)$ (cf. Proposition \ref{proportionalLee}). We consider the space $\mathcal{HL}$ of holomorphic vector fields on $(M,J)$ of Lee type,  defined as follows:

\begin{defn}
The set $\mathcal{HL}(M,J)$ is the set of all holomorphic vector fields which can be obtained as the Lee vector field of some lcK structure on $(M,J)$.
\end{defn}

Let us introduce the following notion, which will be used in the sequel:
\begin{defn}
	A vector field $X$ on a manifold $M$ is called of Killing type if there exists a Riemannian metric $g$ on $M$, such that $X$ is a Killing vector field of $g$.
\end{defn}	

The first observation is that if $T\in\mathcal{HL}(M,J)$ then $JT$ is holomorphic and of Killing type. More precisely we have the following result:

\begin{lemma}\label{hk}
If $T$ is the Lee vector field of some lcK structure $(g,\Omega,\theta)$, then $\cL_{JT}\Omega=0$. If $T$ is holomorphic, then $JT$ is Killing for $g$.
\end{lemma}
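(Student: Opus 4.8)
The plan is to treat the two assertions separately, starting with the identity $\cL_{JT}\Omega=0$, which holds for an arbitrary lcK structure. The natural tool is the Cartan formula $\cL_{JT}\Omega=\di(JT\lrcorner\Omega)+JT\lrcorner\di\Omega$. The key preliminary computation is that $JT\lrcorner\Omega=-\theta$: for any vector field $Y$ one has $\Omega(JT,Y)=g(J^2T,Y)=-g(T,Y)=-\theta(Y)$. Since $\theta$ is closed, the first term $\di(JT\lrcorner\Omega)=-\di\theta$ vanishes. For the second term I would insert the lcK condition $\di\Omega=\theta\wedge\Omega$ and expand $JT\lrcorner(\theta\wedge\Omega)=\theta(JT)\,\Omega-\theta\wedge(JT\lrcorner\Omega)$. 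Here $\theta(JT)=g(T,JT)=0$ because the fundamental form is skew-symmetric, and $\theta\wedge(JT\lrcorner\Omega)=-\theta\wedge\theta=0$. Hence both terms vanish and $\cL_{JT}\Omega=0$, with no holomorphy assumption needed.

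For the second claim I would first upgrade the holomorphy of $T$ to that of $JT$. Writing $(\cL_X J)(Y)=[X,JY]-J[X,Y]$ and using the vanishing of the Nijenhuis tensor of the integrable complex structure $J$, namely $[T,Y]+J[JT,Y]+J[T,JY]-[JT,JY]=0$, a short substitution gives $(\cL_{JT}J)(Y)=[JT,JY]-J[JT,Y]=[T,Y]+J[T,JY]$. Combining this with the hypothesis $\cL_TJ=0$, i.e. $[T,JY]=J[T,Y]$, reduces it to $[T,Y]+J^2[T,Y]=0$, so $JT$ is holomorphic as well. Then, writing the metric as $g=\Omega(\cdot,J\cdot)$ and using the Leibniz rule for the Lie derivative, I would compute $(\cL_{JT}g)(X,Y)=(\cL_{JT}\Omega)(X,JY)+\Omega(X,(\cL_{JT}J)Y)$, in which both terms now vanish by the previous step and the first assertion. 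Thus $\cL_{JT}g=0$ and $JT$ is Killing for $g$.

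The genuinely non-trivial input is the passage from $T$ holomorphic to $JT$ holomorphic, which is precisely where integrability of $J$ enters; the first assertion, by contrast, is a purely formal consequence of $JT\lrcorner\Omega=-\theta$, $\theta(JT)=0$ and the lcK condition. I could alternatively bypass the Nijenhuis manipulation by invoking the standard fact that on a complex manifold the real holomorphic vector fields are stable under $J$, since $X-iJX$ and $JX-iJ(JX)=i(X-iJX)$ span the same complex line in $T^{1,0}M$; however, I would keep the direct computation since it makes the role of integrability explicit and self-contained.
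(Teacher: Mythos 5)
Your proof is correct and follows essentially the same route as the paper: the Cartan formula together with $JT\lrcorner\Omega=-\theta$, $\di\theta=0$ and $\theta(JT)=0$ for the first assertion, and the integrability of $J$ (to pass from $T$ holomorphic to $JT$ holomorphic) plus $g=\Omega(\cdot,J\cdot)$ for the second. You merely spell out the Nijenhuis computation that the paper leaves implicit.
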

\begin{proof}
Indeed, the Cartan formula shows that the anti-Lee vector field $JT$ of an lcK structure always preserves the fundamental 2-form:
$$\cL_{JT}\Omega=\di(JT\lrcorner\Omega+JT\lrcorner(\di\Omega)=\di\theta+JT\lrcorner(\theta\wedge\Omega)=0.$$

If, moreover, $T$ is holomorphic, as $J$ is integrable, $JT$ is also holomorphic, thus $\cL_{JT}g=\cL_{JT}(\Omega(\cdot,J\cdot))=0$.
\end{proof}

Let us now give the description of holomorphic Killing vector fields on Vaisman manifolds, which will be used in the sequel.

\begin{lemma}\label{holkill}
Let $(M,J,g_0,\Omega_0, \theta_0)$ be a normalized Vaisman manifold with Lee vector field $T_0$. Then any holomorphic Killing vector field $K$ on $(M,J,g_0)$ is of the following form:
\[K=cT_0+aJT_0+ K_0,\]
where $K_0\in\{T_0, JT_0\}^{\perp}$, $c$ is a constant, and the function $a\in\mathcal{C}^\infty(M)$, called the Hamiltonian of the holomorphic Killing vector field $K$, satisfies $K_0\lrcorner \Omega_0=\mathrm{d}a$ and $T_0(a)=JT_0(a)=0$.
	\end{lemma}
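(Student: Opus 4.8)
The plan is to exploit that a holomorphic Killing field preserves the whole Vaisman structure. Since $K$ is Killing, $\cL_K g_0=0$, and since $J$ is integrable and $K$ is holomorphic, $\cL_K J=0$; as $\Omega_0=g_0(J\cdot,\cdot)$, these two facts give $\cL_K\Omega_0=0$. The core of the argument is to upgrade this to the statement that $K$ preserves both the Lee form $\theta_0$ and the anti-Lee form $J\theta_0$; once this is known, the claimed decomposition falls out of a single application of Cartan's formula combined with the structure equation \eqref{djt}.

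First I would prove that $c:=\theta_0(K)=g_0(T_0,K)$ is constant and that $\cL_K\theta_0=0$. Differentiating the lcK identity $\di\Omega_0=\theta_0\wedge\Omega_0$ along $K$ and using $\cL_K\Omega_0=0$ gives $(\cL_K\theta_0)\wedge\Omega_0=0$. Since $\cL_K\theta_0$ is a $1$-form and the map $\eta\mapsto\eta\wedge\Omega_0$ on $1$-forms is injective for $n\ge2$ (a pointwise linear-algebra fact, equivalently the lowest-degree hard Lefschetz property), this forces $\cL_K\theta_0=0$. Because $\theta_0$ is closed, $\cL_K\theta_0=\di(\theta_0(K))=\di c$, so $c$ is indeed constant. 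Finally, $\cL_K(J\theta_0)=(\cL_K J)\theta_0+J(\cL_K\theta_0)=0$, so $K$ preserves the anti-Lee form as well.

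With this in hand I would write the pointwise orthogonal decomposition $K=cT_0+aJT_0+K_0$, where $a:=(J\theta_0)(K)=g_0(JT_0,K)$ and $K_0\in\{T_0,JT_0\}^\perp$, using that the normalization gives $T_0\perp JT_0$ and $|T_0|=|JT_0|=1$. The key step is to expand $0=\cL_K(J\theta_0)=\di a+K\lrcorner\di(J\theta_0)$ by Cartan's formula, substituting the normalized structure equation $\di J\theta_0=\theta_0\wedge J\theta_0-\Omega_0$ from \eqref{djt} together with $T_0\lrcorner\Omega_0=J\theta_0$ and $JT_0\lrcorner\Omega_0=-\theta_0$. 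Expanding $K\lrcorner(\theta_0\wedge J\theta_0)$ and $K\lrcorner\Omega_0$ in terms of $c$, $a$, and $K_0$, the $cJ\theta_0$ and $a\theta_0$ contributions cancel exactly, leaving the identity $K_0\lrcorner\Omega_0=\di a$.

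The two invariance conditions then follow by evaluating $K_0\lrcorner\Omega_0=\di a$ on $T_0$ and $JT_0$: since $K_0\perp JT_0$, one gets $T_0(a)=\Omega_0(K_0,T_0)=-g_0(K_0,JT_0)=0$, and since $K_0\perp T_0$, one gets $JT_0(a)=\Omega_0(K_0,JT_0)=g_0(K_0,T_0)=0$. I expect the only genuinely non-formal step to be the implication $(\cL_K\theta_0)\wedge\Omega_0=0\Rightarrow\cL_K\theta_0=0$, and hence that $c$ is constant; everything after that is bookkeeping with Cartan's formula. The conceptual point worth isolating is that preservation of $J\theta_0$, together with \eqref{djt}, is precisely what forces the orthogonal component $K_0$ to be the $\Omega_0$-Hamiltonian vector field of the function $a$.
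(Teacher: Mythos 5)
Your proof is correct and follows essentially the same route as the paper's: derive $\mathcal{L}_K\theta_0=0$ (hence $c$ constant), decompose $K$, and expand $0=\mathcal{L}_K(J\theta_0)$ via Cartan's formula and \eqref{djt} to obtain $K_0\lrcorner\Omega_0=\mathrm{d}a$. The only difference is that you spell out the step $(\mathcal{L}_K\theta_0)\wedge\Omega_0=0\Rightarrow\mathcal{L}_K\theta_0=0$ via the pointwise injectivity of wedging with $\Omega_0$, which the paper leaves implicit.
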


\begin{proof}
Let $K$ be a holomorphic Killing vector field on $(M,J,g_0)$. Then $K$ leaves also invariant the fundamental form, $\mathcal{L}_{K}\Omega_0=0$, and thus also the Lee form, $\mathcal{L}_{K}\theta_0=0$. From the Cartan formula and the closedness of the Lee form $\theta_0$, it follows that $c:=\theta_0(K)$ is constant. Hence, the vector field $K$ splits as:
\[K=cT_0+aJT_0+K_0,\]
where $K_0\in\{T_0, JT_0\}^{\perp}$ and $a:=J\theta_0(K)$. Furthermore, we compute again using Cartan's formula and \eqref{djt}:
\begin{equation*}
\begin{split}
0&=\mathcal{L}_{K} J\theta_0=K\lrcorner \mathrm{d}J\theta_0+\mathrm{d} (K\lrcorner J\theta_0)=K\lrcorner (\theta_0\wedge J\theta_0-\Omega_0)+\mathrm{d}a\\
&=cJ\theta_0-a\theta_0- K\lrcorner\Omega_0+\mathrm{d}a=cJ\theta_0-a\theta_0-cJ\theta_0+a\theta_0-K_0\lrcorner\Omega_0+\mathrm{d}a\\
&=\mathrm{d}a-K_0\lrcorner\Omega_0.
\end{split}
\end{equation*}
In particular, it follows that $T_0(a)=JT_0(a)=0$, since $\Omega_0(K_0, T_0)=\Omega_0(K_0, J T_0)=0$. 
	\end{proof}

\begin{lemma}\label{inv}
	Let $(M,J)$ be a compact manifold of Vaisman type and let $X$ be a holomorphic vector field of Killing type on $M$. Then for any class $[\theta]\in H^1_{\mathrm{lcK}}(M)$
	there exists a normalized Vaisman structure $(\Omega_0, \theta_0)$ on $(M,J)$ such that $[\theta_0]=[\theta]$ and $\mathcal{L}_X\theta_0=0$.
\end{lemma}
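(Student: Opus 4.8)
The plan is to obtain $\theta_0$ by averaging the Lee form of a suitable Vaisman structure over the torus generated by $X$. Since $[\theta]\in H^1_{\mathrm{lcK}}(M)=H^1_{\mathrm{Vaisman}}(M)$, Proposition~\ref{deformI} supplies a normalized Vaisman structure $(\Omega_1,\theta_1)$ on $(M,J)$ with $[\theta_1]=[\theta]$, so I may work with this concrete representative. Because $X$ is holomorphic and of Killing type, fix a metric $h$ for which $X$ is Killing; the flow $\phi_t$ of $X$ then lies in the compact group $\mathrm{Aut}(M,J)\cap\mathrm{Isom}(M,h)$ of holomorphic isometries of $h$. I would let $G$ be the closure of $\{\phi_t\}_{t\in\mathbb{R}}$ in this group, a compact torus consisting of biholomorphisms, and let $\di\mu$ be its normalized Haar measure.

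For $\phi\in G$ write $\theta_\phi:=\phi^*\theta_1$ and $\Omega_\phi:=\phi^*\Omega_1$; since $\phi$ is a biholomorphism homotopic to the identity, $(\Omega_\phi,\theta_\phi)$ is again a normalized Vaisman structure with $[\theta_\phi]=[\theta_1]$. The essential structural input is Proposition~\ref{proportionalLee}, which forces the Lee vector field $T_\phi$ of $(\Omega_\phi,\theta_\phi)$ to equal $\lambda_\phi T_1$ for some constant $\lambda_\phi>0$, so that all the $(\Omega_\phi,\theta_\phi)$ share the \emph{same} canonical distribution $\mathcal{T}\oplus J\mathcal{T}$. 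I then set
\[\theta_0:=\int_G\theta_\phi\,\di\mu(\phi),\qquad \Omega_0:=\theta_0\wedge J\theta_0-\di J\theta_0.\]
Invariance of the Haar measure gives $\phi_t^*\theta_0=\theta_0$, hence $\mathcal{L}_X\theta_0=0$; moreover $\theta_0$ is closed with $[\theta_0]=[\theta]$, and $\Omega_0$ automatically satisfies $\di\Omega_0=\theta_0\wedge\Omega_0$. It remains to check that $\Omega_0$ is positive definite and that $(\Omega_0,\theta_0)$ is Vaisman.

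For positivity I would substitute \eqref{djt}, which for each normalized $\theta_\phi$ reads $\di J\theta_\phi=\theta_\phi\wedge J\theta_\phi-\Omega_\phi$, to get $\Omega_0=\theta_0\wedge J\theta_0+\int_G(\Omega_\phi-\theta_\phi\wedge J\theta_\phi)\,\di\mu$. Evaluating on $(Y,JY)$ and using $(\theta\wedge J\theta)(Y,JY)=\theta(Y)^2+(J\theta)(Y)^2$ as in the proof of Lemma~\ref{setL}, the cross terms telescope into a sum of squares,
\[\Omega_0(Y,JY)=\theta_0(Y)^2+(J\theta_0)(Y)^2+\int_G\bigl|\mathrm{pr}_{\{T_\phi,JT_\phi\}^\perp}Y\bigr|^2_{g_\phi}\,\di\mu\ge0,\]
where $g_\phi:=\Omega_\phi(\cdot,J\cdot)$. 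Vanishing would force $Y$ into the common plane $\mathcal{T}\oplus J\mathcal{T}$ (here Proposition~\ref{proportionalLee} is used again, so that the plane is independent of $\phi$) together with $\theta_0(Y)=(J\theta_0)(Y)=0$; since $\theta_0(T_1)=\int_G\lambda_\phi^{-1}\di\mu>0$ while $\theta_0(JT_1)=0$, the pair $(\theta_0,J\theta_0)$ is nondegenerate on that plane, forcing $Y=0$. Finally, a direct interior-product computation collapses, again by proportionality, to $T_1\lrcorner\Omega_0=c_1\,J\theta_0$ with $c_1:=\theta_0(T_1)>0$; hence the Lee vector field of $(\Omega_0,\theta_0)$ is the holomorphic field $T_0:=c_1^{-1}T_1$, and Cartan's formula combined with \eqref{djt} yields $\mathcal{L}_{T_0}\Omega_0=0$. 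Thus $T_0$ is Killing and $(\Omega_0,\theta_0)$ is a normalized Vaisman structure with the required properties.

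The hard part is precisely this passage from the finite convex combinations of Lemma~\ref{setL} to a continuous average while retaining the \emph{Vaisman} property rather than merely the lcK one. Both the sum-of-squares cancellation and the collapse $T_1\lrcorner\Omega_0=c_1 J\theta_0$ rely on Proposition~\ref{proportionalLee}: it is the common canonical distribution shared by all the averaged structures that controls the otherwise dangerous cross terms and pins the Lee field down as a constant multiple of $T_1$, from which the Killing property follows immediately.
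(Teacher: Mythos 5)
Your proof is correct and follows essentially the same route as the paper: produce a normalized Vaisman representative of the class via Proposition~\ref{deformI}, average its Lee form over the compact torus generated by the flow of $X$, and verify that the average is still the Lee form of a normalized Vaisman structure. The only difference is that where the paper disposes of this last step by citing the convexity of $\cL$ (Lemma~\ref{setL}), you rerun that convexity computation directly for the continuous average (using Proposition~\ref{proportionalLee} to control the cross terms and the Lee field) --- a worthwhile bit of extra care, since Lemma~\ref{setL} as stated only treats finite convex combinations.
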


\begin{proof}
By Proposition~\ref{deformI}, there exists a Vaisman structure $(g_1, \Omega_1, \theta_1)$ on $(M,J)$, such that $[\theta_1]=[\theta]$. Let $g$ be a Riemannian metric on $M$ for which $X$ is a Killing vector field. Then the flow of $X$ consists of isometries $\varphi_t\in \mathrm{Iso}(M,g)$, so its closure $G:=\overline{\{\varphi_t\}}$ in $\mathrm{Iso}(M,g)$ is a compact torus with Haar measure $\mathrm{d}\mu$, normed such that $\int_G \mathrm{d}\mu=1$. We define 
\[\theta_0:=\int_{\gamma\in G} \gamma^*\theta_1\, \mathrm{d}\mu \in \Omega^1(M).\]
Then the following cohomology classes are equal: $[\theta_0]=[\theta_1]=[\theta]$, since the action of any connected Lie group is trivial in cohomology. Furthermore, the $1$-form $\theta_0$ is invariant under $X$, \emph{i.e.} $\mathcal{L}_{X}\theta_0=0$, since $\gamma^*\theta_0=\theta_0$, for any $\gamma\in G$. 
As $X$ is holomorphic, $\gamma^*\theta_1$ is the Lee form of the Vaisman structure $\gamma^*(\Omega_1)$ on $(M,J)$ for every $\gamma\in G$. By the convexity of the set of Lee forms of Vaisman structures (Lemma \ref{setL}), we obtain that $\theta_0$ is the Lee form of a normalized Vaisman structure whose fundamental $2$-form is $\Omega_0:=\theta_0\wedge J\theta_0-\mathrm{d}J\theta_0$. 	
\end{proof}		

\begin{corollary}\label{vaismaninv}
	Let $(M,J)$ be a compact manifold of Vaisman type and let $T\in\mathcal{HL}(M,J)$ be a holomorphic vector field of Lee type. Then for every lcK structure 
	$(g,\Omega,\theta)$ on $(M,J)$ whose Lee vector field is $T$, there exists a normalized Vaisman structure $(\Omega_0, \theta_0)$ on $(M,J)$ such that $[\theta_0]=[\theta]$ and $\theta_0(JT)=0$.
\end{corollary}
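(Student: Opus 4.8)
The plan is to deduce the statement from Lemma~\ref{inv} applied to $X=JT$, and then to strengthen the invariance $\cL_{JT}\theta_0=0$ furnished there into the pointwise condition $\theta_0(JT)=0$.

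First I would check that $JT$ is a holomorphic vector field of Killing type, so that Lemma~\ref{inv} applies to it. Since $T$ is holomorphic and $J$ is integrable, $JT$ is holomorphic; and since $T$ is the Lee vector field of the lcK structure $(g,\Omega,\theta)$, Lemma~\ref{hk} shows that $JT$ is Killing for $g$. As $\theta$ is a Lee form of an lcK structure, its class lies in $H^1_{\mathrm{lcK}}(M)$, so Lemma~\ref{inv} with $X=JT$ yields a normalized Vaisman structure $(\Omega_0,\theta_0)$ with $[\theta_0]=[\theta]$ and $\cL_{JT}\theta_0=0$. Because $\theta_0$ is closed, the Cartan formula gives $\di(\theta_0(JT))=\cL_{JT}\theta_0=0$, so $\theta_0(JT)$ equals a constant $c$; it remains to prove $c=0$.

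Next I would promote the invariance of $\theta_0$ to invariance of the whole Vaisman structure. Writing $\Omega_0=\theta_0\wedge J\theta_0-\di J\theta_0$ and using that $JT$ is holomorphic (so that $\cL_{JT}$ commutes with $J$) together with $\cL_{JT}\theta_0=0$, one gets $\cL_{JT}(J\theta_0)=J\cL_{JT}\theta_0=0$ and hence $\cL_{JT}\Omega_0=0$. Being holomorphic and preserving $\Omega_0$, the field $JT$ is then Killing for $g_0=\Omega_0(\cdot,J\cdot)$, exactly as in the proof of Lemma~\ref{hk}; in particular $\mathrm{div}_{g_0}(JT)=0$.

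The main point, and the only non-formal step, is the vanishing of $c$, which I would obtain by an integration argument. Since $[\theta_0]=[\theta]$ there is $f\in\mathcal{C}^\infty(M)$ with $\theta_0=\theta+\di f$, so $\theta_0(JT)=\theta(JT)+JT(f)$. As $g$ is Hermitian and $T=\theta^\sharp$, one has $\theta(JT)=g(T,JT)=0$, whence $c=JT(f)$. Integrating over $(M,g_0)$ and using that $JT$ is divergence-free, $c\,\mathrm{Vol}(M,g_0)=\int_M JT(f)\,\mathrm{d}V_{g_0}=-\int_M f\,\mathrm{div}_{g_0}(JT)\,\mathrm{d}V_{g_0}=0$, so $c=0$ and therefore $\theta_0(JT)=0$, as required. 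The crux is that one cannot read off $c=0$ from the averaging in Lemma~\ref{inv} alone; it is forced only by combining the lcK identity $\theta(JT)=0$ with the fact that $JT$ is $g_0$-Killing and hence divergence-free.
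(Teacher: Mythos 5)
Your proof is correct, and its skeleton coincides with the paper's: invoke Lemma~\ref{hk} to see that $JT$ is Killing for $g$, apply Lemma~\ref{inv} with $X=JT$ to get a normalized Vaisman structure with $[\theta_0]=[\theta]$ and $\mathcal{L}_{JT}\theta_0=0$, deduce that $c:=\theta_0(JT)$ is constant, and use $\theta(JT)=g(T,JT)=0$ together with $\theta-\theta_0=\di f$ to reduce everything to showing that the constant $JT(f)$ vanishes. The only divergence is in this last step. The paper simply evaluates the identity $c+JT(f)=0$ (with its sign convention) at an extremum of $f$, where $\di f=0$, and concludes $c=0$ immediately; no further structure is needed. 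You instead integrate $JT(f)$ over $(M,g_0)$, which requires first upgrading $\mathcal{L}_{JT}\theta_0=0$ to $\mathcal{L}_{JT}\Omega_0=0$ and hence to $JT$ being Killing (so divergence-free) for $g_0$ --- all of which you justify correctly, and which is information one wants anyway in the sequel (it is part (ii) of Theorem~\ref{lckpothol}). So your argument is sound but slightly less economical, and your closing remark is overstated: the vanishing of $c$ is \emph{not} ``forced only by'' the divergence-free property of $JT$ with respect to $g_0$; the pointwise evaluation at a critical point of $f$ already does the job using nothing beyond $\theta(JT)=0$ and compactness.
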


\begin{proof} 
By Lemma \ref{hk}, $JT$ is Killing for $g$. Then Lemma~\ref{inv} applied to $X:=JT$ and to the class $[\theta]$ ensures the existence of a normalized Vaisman structure $(\Omega_0, \theta_0)$ on $(M,J)$ such that $[\theta_0]=[\theta]$ and $\mathcal{L}_{JT}\theta_0=0$. We only need to check that any such Vaisman structure satisfies $\theta_0(JT)=0$. 
	
	Cartan's formula together with $\mathcal{L}_{JT}\theta_0=0$ already imply that $c:=\theta_0(JT)$ is constant. Moreover, since $\theta$ and $\theta_0$ are cohomologous, there exists $f\in\mathcal{C}^\infty(M, \mR)$ such that $\theta=\theta_0+df$ and thus we compute:
	\[0=\theta(JT)=(\theta_0+df)(JT)=c+JT(f).\]
	Considering this equality at a point of extremum for $f$ on the compact manifold $M$, it follows that the constant $c$ vanishes, hence $\theta_0(JT)=0$.
\end{proof}		

In the sequel, we will need the following criterion to decide when a vector field is the anti-Lee vector field of a $\di_\theta$-exact lcK structure:

\begin{lemma}\label{critantiLee}
	Let $(M,J, \Omega, \theta)$ be a strict lcK manifold, such that $\Omega=\di_{\theta}\beta$, for some $\beta\in\Omega^1(M)$. Let $K$ be a vector field on $M$ such that $\mathcal{L}_K\beta=0$ and $\theta(K)=0$. Then $K$ is the anti-Lee vector field of $(\Omega, \theta)$ if and only if $\beta(K)+1=0$. In particular, the Lee vector field of any lcK structure on such a manifold is not vanishing at any point.
\end{lemma}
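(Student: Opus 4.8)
The plan is to translate the condition ``$K$ is the anti-Lee vector field'' into a scalar equation for the single function $u:=\beta(K)$, and then to show that strictness forces this function to be the constant $-1$. First recall that, since $\Omega=g(J\cdot,\cdot)$ and the metric dual of $JT$ is $J\theta$, the anti-Lee vector field $JT$ is characterized by $JT\lrcorner\Omega=-\theta$; indeed $\Omega(JT,\cdot)=g(J(JT),\cdot)=-g(T,\cdot)=-\theta$. As $\Omega$ is non-degenerate, the map $X\mapsto X\lrcorner\Omega$ is a pointwise isomorphism, so a vector field $K$ is the anti-Lee vector field of $(\Omega,\theta)$ if and only if $K\lrcorner\Omega=-\theta$. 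Everything thus reduces to computing $K\lrcorner\Omega$ under the two standing hypotheses $\theta(K)=0$ and $\cL_K\beta=0$.

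To carry this out I expand $\Omega=\di_\theta\beta=\di\beta-\theta\wedge\beta$ and contract with $K$. Using $\theta(K)=0$ one gets $K\lrcorner(\theta\wedge\beta)=-\beta(K)\,\theta$, while Cartan's formula together with $\cL_K\beta=0$ gives $K\lrcorner\di\beta=\cL_K\beta-\di(K\lrcorner\beta)=-\di(\beta(K))$. Writing $u:=\beta(K)$, this yields
\[
K\lrcorner\Omega=-\di u+u\,\theta,
\]
so the anti-Lee condition $K\lrcorner\Omega=-\theta$ becomes the scalar equation $\di u=(u+1)\theta$, that is $\di v=v\,\theta$ for $v:=u+1=\beta(K)+1$. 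The ``if'' direction is now immediate: if $\beta(K)+1=0$ then $v\equiv0$ and the equation holds, so $K$ is the anti-Lee vector field.

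The substance of the lemma is the ``only if'' direction, which is where I expect the main difficulty to lie: one must deduce $v\equiv0$ from $\di v=v\theta$, and this is exactly where strictness enters. The key observation is that $v$ is a flat (parallel) section for the connection $\di-\theta$: along any smooth path $\gamma$ the function $s\mapsto v(\gamma(s))$ solves the linear ODE $\varphi'=\theta(\gamma')\,\varphi$, whence $v(\gamma(1))=v(\gamma(0))\exp\!\big(\textstyle\int_\gamma\theta\big)$. Consequently the zero set $\{v=0\}$ is both open and closed, so by connectedness of $M$ either $v\equiv0$ or $v$ is nowhere zero. In the latter case $v$ has constant sign and $\theta=\di(\log|v|)$ would be exact, contradicting the assumption that $(\Omega,\theta)$ is strict. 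Hence $v\equiv0$, i.e. $\beta(K)+1=0$, completing the equivalence.

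For the final assertion I apply the equivalence to the anti-Lee vector field itself, $K=JT$. It always satisfies $\theta(JT)=g(T,JT)=0$, and since $JT$ preserves $\theta$ (because $\cL_{JT}\theta=\di(\theta(JT))+JT\lrcorner\di\theta=0$) and also $\Omega$ (Lemma \ref{hk}), the primitive $\beta$ may be chosen $JT$-invariant, so that $\cL_{JT}\beta=0$. The equivalence then forces $\beta(JT)\equiv-1$; as this function is nowhere zero, $JT$ cannot vanish at any point, and therefore neither can the Lee vector field $T$, since $J$ is an isomorphism. This is precisely the nonvanishing claim.
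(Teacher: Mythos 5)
Your proof is correct and follows essentially the same route as the paper: contracting $\Omega=\di_\theta\beta$ with $K$ and using $\theta(K)=0$ and $\cL_K\beta=0$ reduces the anti-Lee condition $K\lrcorner\Omega=-\theta$ to $\di_\theta(\beta(K)+1)=0$, and strictness then forces $\beta(K)+1\equiv 0$. The only difference is that where the paper invokes the injectivity of $\di_\theta$ on functions by citing \cite[Lemma 2.1]{mmp}, you prove it inline via the monodromy identity $v(\gamma(1))=v(\gamma(0))\exp\bigl(\int_\gamma\theta\bigr)$ (zero set open and closed, otherwise $\theta=\di\log|v|$ would be exact), which is a valid self-contained substitute.
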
	
	
	\begin{proof}
		By definition, $K$ is the anti-Lee vector field of the lcK structure $ (\Omega, \theta)$ if and only if $K\lrcorner \Omega=-\theta$. On the other hand, we compute as follows, using Cartan's formula and the fact that $\mathcal{L}_K\beta=0$ and $\theta(K)=0$:
		\[K\lrcorner \Omega=K\lrcorner \di_{\theta}\beta=K\lrcorner (\di\beta-\theta\wedge \beta)=\mathcal{L}_K\beta-\di(\beta(K))-\theta(K)\beta+\beta(K)\theta=-\di_{\theta}(\beta(K)).\]
		Hence, $K\lrcorner \Omega=-\theta$ is equivalent to $\di_{\theta}(\beta(K)+1)=0$.
		
Since the lcK structure is not exact, $\di_{\theta}$ is injective on functions (cf. \cite[Lemma 2.1]{mmp}), showing that $K$ is the anti-Lee vector field of $(\Omega, \theta)$ if and only if $\beta(K)+1=0$. In particular, $K$, and thus also the Lee vector field $-JK$, is nowhere vanishing on $M$.
	\end{proof}	
	
	We will now gather some further information about holomorphic vector fields of Lee type on Vaisman-type manifolds.

\begin{theorem}\label{lckpothol}
Let $(M,J)$ be a  compact Vaisman-type manifold endowed with an lcK structure $(g,\Omega,\theta)$  whose Lee vector field $T$ is holomorphic. 

$(i)$ There exists a normalized Vaisman structure $(g_0,\Omega_0,\theta_0, T_0)$ on $(M,J)$ such that $[\theta]=[\theta_0]$ and $\theta_0(JT)=0$. 
	
$(ii)$ The anti-Lee vector field $JT$ is holomorphic and Killing with respect to the Vaisman metric $g_0$ and there exists a function $a\in\mathcal{C}^\infty(M)$, with $T_0(a)=JT_0(a)=0$, and such that $JT=aJT_0-J\mathrm{grad}^{g_0}a$.

$(iii)$ The function $a$ is everywhere positive on $M$.

Conversely, if $(g_0,\Omega_0,\theta_0, T_0)$ is a normalized Vaisman structure on $(M,J)$ and $K$ is a holomorphic Killing vector field on $(M,J,g_0)$ with positive Hamiltonian $a$ and vanishing $T_0$-component $c$, then $-JK$ is the Lee vector field of an lcK structure $(\Omega,\theta)$ on $(M,J)$ such that $[\theta]=[\theta_0]$ and $\theta_0(JT)=0$. 
\end{theorem}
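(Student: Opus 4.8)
Part $(i)$ is exactly Corollary~\ref{vaismaninv} applied to the holomorphic Lee vector field $T$ of $(g,\Omega,\theta)$. For $(ii)$, since $T$ is holomorphic and $J$ is integrable, $JT$ is holomorphic too. The normalized Vaisman structure produced in $(i)$ (via the averaging of Lemma~\ref{inv}) satisfies $\cL_{JT}\theta_0=0$; as $JT$ is holomorphic it commutes with the action of $J$ on forms, so $\cL_{JT}(J\theta_0)=0$ and $\cL_{JT}(\di J\theta_0)=0$, whence $\cL_{JT}\Omega_0=\cL_{JT}(\theta_0\wedge J\theta_0-\di J\theta_0)=0$. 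Combined with $\cL_{JT}J=0$ this gives $\cL_{JT}g_0=0$, so $JT$ is Killing for $g_0$. Applying Lemma~\ref{holkill} to $K:=JT$, and using $\theta_0(JT)=0$ to annihilate the constant $c$, we get $JT=aJT_0+K_0$ with $a=J\theta_0(JT)$, $T_0(a)=JT_0(a)=0$, and $K_0\lrcorner\Omega_0=\di a$; rewriting the last equality as $JK_0=\mathrm{grad}^{g_0}a$ yields $JT=aJT_0-J\mathrm{grad}^{g_0}a$, equivalently $T=aT_0-\mathrm{grad}^{g_0}a$.

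\textbf{Part (iii), preliminaries.} Pairing the formulas for $T$ and $JT$ with $T_0$, $JT_0$ and using $T_0(a)=JT_0(a)=0$, $|T_0|_{g_0}=1$ gives $a=\theta_0(T)=g_0(T_0,T)=g_0(JT,JT_0)$ and the monotonicity relation $T(a)=a\,T_0(a)-|\mathrm{grad}^{g_0}a|_{g_0}^2=-|\mathrm{grad}^{g_0}a|_{g_0}^2\le0$. Next I claim $JT$ is nowhere vanishing. Indeed $\di_\theta\Omega=0$, and by Corollary~\ref{vt} we have $H^2_\theta(M)=0$, so $\Omega=\di_\theta\beta$ for some $\beta\in\Omega^1(M)$. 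Since $\cL_{JT}\Omega=0$ (Lemma~\ref{hk}) and $\cL_{JT}\theta=-JT\lrcorner(\cL_{JT}\Omega)=0$, the closure $G$ of the flow of $JT$ in $\mathrm{Iso}(M,g_0)$ (a torus, as $JT$ is $g_0$-Killing) preserves both $\theta$ and $\Omega$; averaging $\beta$ over $G$ we may assume $\cL_{JT}\beta=0$ while keeping $\Omega=\di_\theta\beta$. As $\theta(JT)=g(T,JT)=0$ and $JT$ is the anti-Lee field of $(\Omega,\theta)$, Lemma~\ref{critantiLee} forces $\beta(JT)=-1$, so $JT\neq0$ everywhere. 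At a critical point of $a$ we have $\mathrm{grad}^{g_0}a=0$, hence $JT=aJT_0$, so $a\neq0$ there; in particular $a$ does not vanish at its minimum. It therefore suffices to prove the weaker bound $a\ge0$: the global minimum, being a critical point, would then be positive, giving $a>0$.

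\textbf{Part (iii), the sign.} Using the normalized Vaisman identity $\di J\theta_0=\theta_0\wedge J\theta_0-\Omega_0$ (from \eqref{djt}) together with $\di\Omega^{n-1}=(n-1)\theta\wedge\Omega^{n-1}$, Stokes' theorem applied to $\di(J\theta_0\wedge\Omega^{n-1})=0$ gives
\[
\int_M(\Omega_0-\theta_0\wedge J\theta_0)\wedge\Omega^{n-1}=-(n-1)\int_M J\theta_0\wedge\theta\wedge\Omega^{n-1}=\frac{n-1}{n}\int_M a\,\Omega^n,
\]
where the last equality uses $\theta=-JT\lrcorner\Omega$, $\theta_0(JT)=0$ and $J\theta_0(JT)=a$ to evaluate $J\theta_0\wedge\theta\wedge\Omega^{n-1}=-\tfrac1n a\,\Omega^n$. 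As $\Omega_0-\theta_0\wedge J\theta_0$ is a positive semidefinite $(1,1)$-form (see the proof of Lemma~\ref{d1}) and $\Omega$ is positive, the left-hand side is nonnegative, so $\int_M a\,\Omega^n\ge0$ and $a>0$ somewhere. Converting this to the pointwise bound $a\ge0$ is the main obstacle: neither the integral inequality nor the nonvanishing of $JT$ alone excludes a negative interior minimum of $a$. I would rule out such a minimum by a maximum principle, feeding the lcK equation $\di\Omega=\theta\wedge\Omega$ and the monotonicity $T(a)=-|\mathrm{grad}^{g_0}a|_{g_0}^2$ into a second-order comparison at a putative minimizer of $a$; this analytic step is the heart of the theorem. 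Once $a\ge0$ is known, the reduction above upgrades it to $a>0$.

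\textbf{Converse.} Given a holomorphic Killing field $K=aJT_0-J\mathrm{grad}^{g_0}a$ with $a>0$ and $c=\theta_0(K)=0$, set $T:=-JK=aT_0-\mathrm{grad}^{g_0}a$ (holomorphic, since $K$ is) and define $\Omega:=\tfrac1a\Omega_0$, a positive $(1,1)$-form since $a>0$. Then
\[
\di\Omega=-\frac{\di a}{a^2}\wedge\Omega_0+\frac1a\theta_0\wedge\Omega_0=(\theta_0-\di\log a)\wedge\Omega,
\]
so $(\Omega,\theta)$ is lcK with closed Lee form $\theta:=\theta_0-\di\log a$, which is cohomologous to $\theta_0$. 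Using $JT_0\lrcorner\Omega_0=-\theta_0$ and $(J\mathrm{grad}^{g_0}a)\lrcorner\Omega_0=-\di a$ one computes $K\lrcorner\Omega=\tfrac1a(-a\theta_0+\di a)=-\theta$, so $K=JT$ is the anti-Lee field and $-JK=T$ is the Lee vector field of $(\Omega,\theta)$; finally $\theta_0(JT)=\theta_0(K)=0$, completing the converse.
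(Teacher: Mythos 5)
Parts (i), (ii) and the converse are correct and follow the paper's route (Corollary~\ref{vaismaninv}, Lemma~\ref{holkill}, and the explicit structure $(\tfrac1a\Omega_0,\theta_0-\di\log a)$). The problem is part (iii), where you explicitly stop short: after correctly establishing that $T$ (equivalently $JT$) is nowhere vanishing via Corollary~\ref{vt} and Lemma~\ref{critantiLee}, and correctly reducing the claim to $a\ge 0$ (since $\di a=0$ at a minimum forces $T=aT_0$ there, hence $a\neq 0$), you leave the pointwise bound $a\ge 0$ as an unproved ``maximum principle'' step. The integral identity you derive only yields $\int_M a\,\Omega^n>0$, which, as you note, does not exclude a negative minimum. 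This is a genuine gap, and it is exactly the step where the paper's argument has its content.

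The missing idea is elementary and does not require any second-order comparison. Write $\theta=\theta_0+\di f$ and use the nonvanishing of $T$ in the form $0<|T|_g^2=\theta(T)$; substituting $T=aT_0-\mathrm{grad}^{g_0}a$ and $\theta_0(T_0)=1$, $T_0(a)=0$ gives the pointwise inequality $a+aT_0(f)-\langle\di f,\di a\rangle_{g_0}>0$. On the minimum level set $a^{-1}(m)$ one has $\di a=0$, so this reads $a(1+T_0(f))>0$ there; if $m<0$ this forces $T_0(f)<-1$ on all of $a^{-1}(m)$. Now $T_0$ is $g_0$-parallel of unit length, so its integral curves are complete unit-speed geodesics, and since $T_0(a)=0$ the set $a^{-1}(m)$ is saturated by such curves. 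Along any one of them $f$ would then satisfy $\frac{\di}{\di t}f(\gamma(t))<-1$ for all $t\in\mR$, contradicting the boundedness of $f$ on the compact manifold $M$. Hence $m>0$. If you replace your ``maximum principle'' paragraph by this first-order argument along the $T_0$-geodesics (your preliminaries already contain everything needed: the nonvanishing of $T$, the formula for $T$, and $T_0(a)=0$), the proof of (iii) is complete and coincides with the paper's.
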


\begin{proof} 
(i) Follows directly from Corollary~\ref{vaismaninv}.

(ii) Since $T$ is holomorphic, the same holds for $JT$, so $\mathcal{L}_{JT}J=0$. Moreover $\mathcal{L}_{JT}\theta_0=\di(\theta_0(JT))=0$, so $\mathcal{L}_{JT}\Omega_0=\mathcal{L}_{JT}(\theta_0\wedge J\theta_0-\di J\theta_0)=0$, whence $\mathcal{L}_{JT} g_0=0$.

By Lemma~\ref{holkill}, there exists a real constant $c$ and a function $a\in\mathcal{C}^\infty(M)$ (the Hamiltonian of $JT)$, with $T_0(a)=JT_0(a)=0$, and such that $JT=cT_0+aJT_0-J\mathrm{grad}^{g_0}a$. Moreover, $c$ vanishes, since
$$0=\theta_0(JT)=c-\theta_0(J\mathrm{grad}^{g_0}a)=c+JT_0(a)=c.$$

(iii) Corollary \ref{vt} ensures that every lcK structure $(\Omega,\theta)$ on a Vaisman-type manifold is $\di_\theta$-exact, i.e. $\Omega=\di_\theta\beta$ for some $1$-form $\beta$. Using an averaging argument as before, one can assume that $\mathcal{L}_{JT}\beta=0$. By the converse of Lemma \ref{critantiLee} applied to $K=JT$ we thus obtain that $T$ is never vanishing.

	 Since $[\theta]=[\theta_0]$, there exists $f$ such that $\theta=\theta_0+\di f$. We now compute:
	\begin{eqnarray*}
	|T|_g^2&=&\theta(T)=\theta_0(T)+T(f)=\theta_0(aT_0-\mathrm{grad}^{g_0}a)+aT_0(f)-\di f( \mathrm{grad}^{g_0}a)\\
	&=&a+aT_0(f)-\langle \di f, \di a \rangle_{g_0},
	\end{eqnarray*}
	which together with the fact that $T$ vanishes nowhere on $M$, yields the following inequality:
	\begin{equation}\label{ineqthetaT}
	a+aT_0(f)-\langle \di f, \di a \rangle_{g_0}>0
	\end{equation}
	
	Let us denote by $m:=\displaystyle \min_M a$ the minimum of the function $a$ on the compact manifold $M$. Applied at any $p\in a^{-1}(m)$, the above inequality  yields $a(p)(1+T_0(f)(p))>0$. Hence, in particular, we have $m=a(p)\neq 0$. 
	
	We assume, by contradiction, that $m < 0$. The above inequality yields $1+T_0(f)(p)<0$ for all $p\in a^{-1}(m)$. As $T_0$ is a parallel vector field of constant length $1$ with respect to the Vaisman metric $g_0$, each integral curve of $T_0$ is a complete geodesic of $(M,g_0)$. Moreover, the restriction of the function $a$ along any integral curve of $T_0$ is constant, since  $T_0(a)=0$. Hence, along a complete geodesic $\gamma\colon \mR\to M$ starting at a point $p\in a^{-1}(m)$, the  following inequality holds: $T_0(f)(\gamma(t))<-1$, for all $t\in\mR$, which yields a contradiction, since the function $f$ is bounded. This proves that $m=\displaystyle \min_M a>0$, hence the function $a$ is positive on $M$.
	
	Conversely, let $K=aJT_0-J\mathrm{grad}^{g_0}a$ be a holomorphic Killing vector field on $(M,J,g_0)$, with  $T_0(a)=JT_0(a)=0$. We claim that the Lee vector field of the lcK structure $$(\Omega:=\frac{1}{a}\Omega_0,  \theta:=\theta_0-\frac{\di a}{a})$$ is $-JK$. Indeed, 
	$$(-JK)\lrcorner \Omega=(aT_0-\mathrm{grad}^{g_0}a)\lrcorner (\frac1a \Omega_0)=J\theta_0-\frac1aJ\di a=J\theta.$$
\end{proof}	

\begin{remark}\label{remexpl}
Theorem~\ref{lckpothol} gives rise to examples of lcK structures on compact Vaisman-type manifolds whose Lee vector field is not homothetic to the common Lee vector field of the Vaisman structures. More precisely, if there exists on a compact Vaisman  manifold $(M,J, \Omega_0, \theta_0, T_0)$ a Killing vector field $K$ which is not a linear combination of $T_0$ and $JT_0$, then, according to Lemma~\ref{holkill}, it decomposes as $K=cT_0+aJT_0-J\mathrm{grad}^{g_0}a$, where $c$ is a constant and the Hamiltonian function $a$ is not constant. As $T_0$ is also a Killing vector field for $g_0$, by subtracting $cT_0$ and adding $kJT_0$ to $K$, for any constant $\displaystyle k>|\min_M a|$, we obtain a Killing vector field whose Hamiltonian function $a+k$ is positive everywhere and has no component along $T_0$, namely $\widetilde K=(a+k)JT_0-J\mathrm{grad}^{g_0}(a+k)$. According to the converse part of Theorem~\ref{lckpothol}, there exists an lcK structure on $(M,J)$ whose Lee vector field equals $-J\widetilde K=(a+k)T_0-\mathrm{grad}^{g_0}(a+k)$, which is clearly not homothetic to $T_0$, because $a$ is not constant and $\mathrm{grad}^{g_0}(a+k)$ is orthogonal to $T_0$. Examples of compact Vaisman  manifolds admitting Killing vector fields  which are not everywhere tangent to the canonical distribution $\mathcal{T}\oplus J\mathcal{T}$ are provided for instance by compact toric Vaisman manifolds (see \cite{n}, \cite{mmp}, \cite{p}) or compact homogeneous Vaisman manifolds (see \cite{gmo}).
\end{remark}	

The above result is not completely satisfactory, as the description of the space $\mathcal{HL}(M,J)$ of holomorphic vector fields of Lee type on $(M,J)$ obtained in Theorem \ref{lckpothol} depends on the choice of some background Vaisman structure. However, it is possible to give a completely intrinsic description of the space $\mathcal{HL}(M,J)$ on compact Vaisman-type manifolds:

\begin{thm} Let $(M,J)$ be  a compact complex manifold of Vaisman type, and let $\mathcal{T}$ denote as before the 1-dimensional distribution spanned by the Lee vector field of any Vaisman structure on $(M,J)$. Then a holomorphic vector field $T$ belongs to $\mathcal{HL}(M,J)$ if and only if the following conditions are satisfied:

(i) $JT$ is of Killing type.
	
(ii) For any point $p\in M$, if $T_p\in \mathcal{T}_p\oplus J\mathcal{T}_p$, then $T_p\in \mathcal{T}^+_p$.

Moreover, if $T\in\mathcal{HL}(M,J)$, then for any cohomology class $\mu\in H^1_{\mathrm{lcK}}(M)$ there exists an lcK structure whose Lee form represents this class and whose Lee vector field is $T$.
\end{thm}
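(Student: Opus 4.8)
The plan is to establish the two implications separately, in each case transferring the question to a suitably chosen background Vaisman structure and then reading off the answer from the pointwise splitting of $TM$ into $(\mathcal{T}\oplus J\mathcal{T})$ and its $g_0$-orthogonal complement $\{T_0,JT_0\}^\perp$, along which $\mathrm{grad}^{g_0}a$ always lies (since $T_0(a)=JT_0(a)=0$).

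For the direct implication I would start from $T\in\mathcal{HL}(M,J)$, i.e. $T$ holomorphic and the Lee vector field of some lcK structure $(g,\Omega,\theta)$. Condition (i) is immediate from Lemma \ref{hk}, which gives that $JT$ is Killing for $g$ and hence of Killing type. For (ii) I would apply Theorem \ref{lckpothol}, obtaining a normalized Vaisman structure $(g_0,\Omega_0,\theta_0,T_0)$ with $\theta_0(JT)=0$ and $JT=aJT_0-J\mathrm{grad}^{g_0}a$ for a positive function $a$ satisfying $T_0(a)=JT_0(a)=0$; applying $-J$ yields $T=aT_0-\mathrm{grad}^{g_0}a$. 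Since $\mathrm{grad}^{g_0}a$ is pointwise orthogonal to $\mathcal{T}\oplus J\mathcal{T}$, this is exactly the orthogonal decomposition of $T$, so $T_p\in\mathcal{T}_p\oplus J\mathcal{T}_p$ occurs precisely when $(\mathrm{grad}^{g_0}a)(p)=0$, and there $T_p=a(p)T_0(p)\in\mathcal{T}^+_p$ because $a(p)>0$. This is (ii).

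For the converse I would assume $T$ holomorphic with (i) and (ii), and fix an arbitrary class $\mu\in H^1_{\mathrm{lcK}}(M)$ (this choice will simultaneously yield the final assertion). As $T$ is holomorphic and $J$ integrable, $JT$ is holomorphic, and by (i) it is of Killing type; so I would apply Lemma \ref{inv} to $X=JT$ and $\mu$ to get a normalized Vaisman structure $(\Omega_0,\theta_0)$ with $[\theta_0]=\mu$ and $\mathcal{L}_{JT}\theta_0=0$. A short computation of the type used in Theorem \ref{lckpothol}(ii) (from $\mathcal{L}_{JT}J=0$ and $\mathcal{L}_{JT}\theta_0=0$ deduce $\mathcal{L}_{JT}\Omega_0=0$, hence $\mathcal{L}_{JT}g_0=0$) shows that $JT$ is a holomorphic Killing field on $(M,J,g_0)$. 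Lemma \ref{holkill} then gives $JT=cT_0+aJT_0-J\mathrm{grad}^{g_0}a$ with $c$ constant and $T_0(a)=JT_0(a)=0$, so $T=aT_0-cJT_0-\mathrm{grad}^{g_0}a$.

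The step I expect to be the main obstacle is converting the pointwise hypothesis (ii) into the two global facts $c=0$ and $a>0$ needed to invoke the converse part of Theorem \ref{lckpothol}. The key idea will be to evaluate (ii) at a minimum point $p_0$ of $a$, which exists by compactness of $M$: there $\mathrm{grad}^{g_0}a$ vanishes, so $T_{p_0}=a(p_0)T_0(p_0)-cJT_0(p_0)$ lies in $\mathcal{T}_{p_0}\oplus J\mathcal{T}_{p_0}$, and (ii) forces $T_{p_0}\in\mathcal{T}^+_{p_0}$. Since $T_0(p_0)$ and $JT_0(p_0)$ are linearly independent this gives simultaneously $c=0$ and $a(p_0)=\min_M a>0$, hence $a>0$ everywhere. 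With $c=0$ and $a>0$, the field $JT=aJT_0-J\mathrm{grad}^{g_0}a$ is a holomorphic Killing field with positive Hamiltonian and vanishing $T_0$-component, so the converse part of Theorem \ref{lckpothol} produces an lcK structure (explicitly $(\tfrac1a\Omega_0,\,\theta_0-\tfrac{\di a}{a})$) with Lee vector field $-J(JT)=T$ and Lee form in $[\theta_0]=\mu$. This shows $T\in\mathcal{HL}(M,J)$, and since $\mu$ was arbitrary it also establishes the final claim.
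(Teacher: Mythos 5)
Your proposal is correct and follows essentially the same route as the paper: Lemma \ref{hk} and Theorem \ref{lckpothol} for the direct implication, and for the converse Lemma \ref{inv} plus Lemma \ref{holkill} followed by evaluating condition (ii) at a minimum point of $a$ to force $c=0$ and $a>0$, then invoking the converse part of Theorem \ref{lckpothol}. The only (welcome) difference is that you spell out why $JT$ is Killing for $g_0$ in the converse, a step the paper leaves implicit.
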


\begin{proof}
	If $T\in\mathcal{HL}(M,J)$, there exists an lcK structure $(g, \Omega,\theta)$ on $(M,J)$ whose Lee vector field is $T$. By Lemma \ref{hk}, $JT$ is Killing for $g$, so (i) is satisfied.
	
	By Theorem~\ref{lckpothol}, we may choose  a Vaisman structure $(g_0,\Omega_0,\theta_0, T_0)$ on $(M,J)$ such that $[\theta]=[\theta_0]$, $\theta_0(JT)=0$, and $JT$ is a Killing vector field with respect to the metric $g_0$ satisfying
	\[T=aT_0-\mathrm{grad}^{g_0}a,\]
	where the Hamiltonian function $a\in\mathcal{C}^\infty(M)$ is everywhere positive.
	
	If $p$ is any point in $M$ such that $T_p\in \mathcal{T}_p\oplus J\mathcal{T}_p$, then $T_p=a(p)(T_0)_p$, because $\mathrm{grad}^{g_0}a\in\{T_0, JT_0\}^{\perp}$. In particular, as $a>0$, it follows that $T_p\in \mathcal{T}^+_p$. Thus (ii) is satisfied too.
	
	Conversely, let $T$ be a holomorphic vector field satisfying (i) and (ii). By Lemma~\ref{inv}, for any cohomology class in $\mu\in H^1_{\mathrm{lcK}}(M)$, there exists a normalized Vaisman structure $(\Omega_0, \theta_0)$ on $(M,J)$ such that $[\theta_0]=\alpha$, and $\mathcal{L}_{JT}\theta_0=0$. Hence, $JT$ is a Killing vector field of the Vaisman metric $g_0$ and according to Lemma~\ref{holkill}, we can write
	\[ JT=cT_0+aJT_0-J\mathrm{grad}^{g_0}a,\]
	where $c$ is a constant and $a\in\mathcal{C}^{\infty}(M)$ with $T_0(a)=JT_0(a)=0$. Applying $J$ to this equality yields $T=-cJT_0+aT_0-\mathrm{grad}^{g_0}a$. 
	
	Let $p$ be a point of minimum of the function $a$. Then $T_p=-c(JT_0)_p+a(p)(T_0)_p\in \mathcal{T}\oplus J\mathcal{T}$. The condition $(ii)$ from the definition of $\mathcal{HL}$ implies that $T_p\in\mathcal{T}^+_p$, whence $c=0$ and $a(p)>0$. This shows that $T=aT_0-\mathrm{grad}^{g_0}a$ and the function $a$ is positive. The converse part of Theorem~\ref{lckpothol} shows that $T$ is the Lee vector field of the lcK structure $(\Omega:=\frac{1}{a}\Omega_0,  \theta:=\theta_0-\frac{\di a}{a})$, so $T\in\mathcal{HL}(M,J)$. Moreover, the Lee form of this lcK structure satisfies $[\theta]=[\theta_0]=\mu$.
\end{proof}	

Now, that we have intrinsically characterized the set $\mathcal{HL}(M,J)$ of all holomorphic vector fields which can occur as Lee vector fields of lcK structures on Vaisman-type manifolds, we would like to describe, for each fixed $T\in\mathcal{HL}(M,J)$, the set of all lcK structures admitting $T$ as Lee vector field. 
In order to do so, we will fix some cohomology class $\mu\in H^1_{\mathrm{lck}}(M,J)$ and apply Corollary \ref{vaismaninv} in order to choose a normalized Vaisman structure $(\Omega_0,\theta_0)$ whose Lee form satisfies $[\theta_0]=\mu$ and $\theta_0(JT)=0$.

Our aim is to describe all lcK structures with Lee vector field $T$, and with Lee form in the cohomology class $\mu$. If $(\Omega,\theta)$ is such an lcK structure, then $\theta=\theta_0+\di f$, for some function $f\in\mathcal{C}^{\infty}(M, \mR)$, so $(e^{-f}\Omega,\theta_0)$ is an lcK structure on $(M,J)$ with Lee form $\theta_0$. By Proposition \ref{decomposition}, there exists a function $h\in \mathcal{C}^\infty(M, \mR)$ and a $(0,1)$-form $\alpha\in\Omega^{0,1}(M)$ in the kernel of the twisted Laplacian $\Delta_{\bar\partial_{\theta_0}}$, such that
	\begin{equation}\label{fo}
	e^{-f}\Omega=d_{\theta_0}d^c_{\theta_0}h+\partial_{\theta_0}\alpha+\bar\partial_{\theta_0}\overline\alpha.
	\end{equation}
Since $\mathcal{L}_{JT}\theta=0$ and  $\mathcal{L}_{JT}\theta_0=0$, it follows that $\mathcal{L}_{JT}\di f=0$, hence $JT(f)$ is constant. As $f$ has critical points, this constant must be zero, so $JT(f)=0$.
	
	Let $\varphi_t$ denote the flow of the holomorphic Killing vector field $JT$. Since $JT$ preserves $J$, $\Omega$, $\theta$, $\theta_0$ and $f$, we obtain:
	\begin{equation*} e^{-f}\Omega=\varphi_t^*(e^{-f}\Omega)=d_{\theta_0}d^c_{\theta_0}(\varphi_t^*h)+\partial_{\theta_0}(\varphi_t^*\alpha)+\overline{\partial_{\theta_0}(\varphi_t^*\alpha)}.\end{equation*}
	Thus, after averaging over the compact torus $G:=\overline{\{\varphi_t\}}$, we may assume that both the function $h$ and the $1$-form $\alpha$ are  invariant under $JT$. 
	
	Using the fact that $\bar \partial_{\theta_0}\alpha=0=\partial_{\theta_0}\bar\alpha$ and the relation \eqref{reltwisteddiff} between $\di_{\theta}$ and $\di_{\theta_0}$, the fundamental $2$-form $\Omega$ can be expressed as follows:
\[ \Omega=e^f(\di_{\theta_0}\di^c_{\theta_0} h+\partial_{\theta_0}\alpha+\bar\partial_{\theta_0}\overline\alpha)=e^f\di_{\theta_0}[\di^c_{\theta_0}h+2\mathrm{Re}\alpha]=\di_{\theta}[e^f (\di^c_{\theta_0} h+2\mathrm{Re}\, \alpha)]=\di_{\theta} \beta,\]
where 
\begin{equation} \label{beta}\beta:=e^f (d^c_{\theta_0} h+2\mathrm{Re}\, \alpha)
\end{equation}
 is also $JT$-invariant.

Lemma \ref{critantiLee} applied to $K:=JT$ shows that $\beta(JT)+1=0$, which by \eqref{beta} is equivalent to 
\begin{equation} \label{eqantiLee}
	e^{-f}=-T(h)+h\theta_0(T)-2\mathrm{Re}(\alpha(JT)).
	\end{equation}

Conversely, suppose that $h\in \mathcal{C}^{\infty}(M, \mR)$ is a smooth function and $\alpha\in\Omega^{0,1}(M)$ is a $(0,1)$-form in the kernel of the twisted Laplacian $\Delta_{\bar\partial_{\theta_0}}$ such that:
\begin{enumerate}
\item $\mathcal{L}_{JT}h=0$ and $\mathcal{L}_{JT}\alpha=0$;
\item the right hand side of \eqref{eqantiLee} is positive on $M$;
\item the $(1,1)$-form $\di_{\theta_0}\di^c_{\theta_0} h+\partial_{\theta_0}\alpha+\bar\partial_{\theta_0}\overline\alpha$ is positive definite.
\end{enumerate}
Then if $f$ is defined by \eqref{eqantiLee}, the pair $(\Omega:=e^f(\di_{\theta_0}\di^c_{\theta_0} h+\partial_{\theta_0}\alpha+\bar\partial_{\theta_0}\overline\alpha), \theta:=\theta_0+\mathrm{d}f)$ is an lcK structure on $(M,J)$ whose Lee vector field is $T$ by Lemma \ref{critantiLee}.

Motivated by the above considerations, we now introduce for any holomorphic vector field of Lee type $T\in \mathcal{HL}(M,J)$ and for any normalized Vaisman structure $(g_0, \Omega_0, \theta_0,T_0)$,
the sets of functions and of twisted harmonic forms that are invariant by $JT$:
\begin{gather*}
 \mathcal{C}^\infty_{JT}(M):=\{h\in\mathcal{C}^\infty(M)\, | \, JT(h)=0\}, \\
 \mathcal{H}^{*,*}_{\bar\partial_{\theta_0}, JT}(M):=\{\alpha\in\Omega^{*,*}(M)\, | \, \Delta_{\bar\partial_{\theta_0}}\alpha=0, \mathcal{L}_{JT}\alpha=0\},
\end{gather*}
and we define the open subset $\mathcal{F}_{JT}$ of $ \mathcal{C}^\infty_{JT}(M)\times \mathcal{H}^{0,1}_{\bar\partial_{\theta_0}, JT}(M)$ consisting of all pairs $(h, \alpha)$ that satisfy the following two conditions:
\begin{equation}
\begin{cases}
\di_{\theta_0}\di^c_{\theta_0} h+\partial_{\theta_0}\alpha+\bar\partial_{\theta_0}\overline\alpha \text{ is a positive definite } \text{$(1,1)$-form}\\
-T(h)+h\theta_0(T)-2\mathrm{Re}(\alpha(JT))>0.
\end{cases}
\end{equation}

Note that the set $\mathcal{F}_{JT}$ is non-empty. Indeed, the pair $(1,0)$ belongs to $ \mathcal{F}_{JT}$, because the $(1,1)$-form $\di_{\theta_0}\di^c_{\theta_0} 1=\Omega_0$ is positive definite and the second inequality is fulfilled since $\theta_0(T)$ is the Hamiltonian of the Killing vector field $JT$, which is positive by Theorem \ref{lckpothol}. We can now state our main result:

\begin{theorem}\label{mainthm}
Let $T\in\mathcal{HL}(M,J)$ be a holomorphic vector field of Lee type on a Vaisman-type manifold, and $\mu\in H^1_{\mathrm{lck}}(M,J)$ be a fixed cohomology class of lcK type. Fix any normalized Vaisman structure $(\Omega_0,\theta_0)$ whose Lee form satisfies $[\theta_0]=\mu$ and $\theta_0(JT)=0$ (the existence of such a Vaisman structure is granted by Corollary \ref{vaismaninv}).
	There is a surjective map from the set $\mathcal{F}_{JT}$ to the set of all lcK structures $(\Omega, \theta)$ on $(M,J)$ having $[\theta]=\mu$ and the Lee vector field equal to $T$, given by:
	\begin{equation}\label{defmap}
	(h, \alpha) \mapsto (\Omega:=e^f(\di_{\theta_0}\di^c_{\theta_0} h+\partial_{\theta_0}\alpha+\bar\partial_{\theta_0}\overline\alpha), \theta:=\theta_0+\mathrm{d}f), 
	\end{equation}
	with $e^{-f}:=-T(h)+h\theta_0(T)-2\mathrm{Re}(\alpha(JT))$. 
	
	Moreover, two pairs $(h, \alpha), (\widetilde h, \widetilde\alpha)\in \mathcal{F}_{JT}$ are mapped to the same lcK structure if and only if there is a positive constant $c$, such that $\alpha=c\widetilde \alpha$ and $h-c\widetilde {h}_0$ is equal to the imaginary part of a function in  $\mathcal{H}^{0,0}_{\bar\partial_{\theta_0}, JT}(M)$.
\end{theorem}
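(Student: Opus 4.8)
The plan is to prove the two assertions in turn, observing that the well-definedness and surjectivity of the map have essentially been established in the discussion preceding the statement, so that the real content lies in describing the fibers.

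First I would record why the map is well defined and onto. Given $(h,\alpha)\in\mathcal{F}_{JT}$, the second positivity condition makes $f$ well defined through $e^{-f}=-T(h)+h\theta_0(T)-2\mathrm{Re}(\alpha(JT))$, while the first makes $\Omega$ a positive $(1,1)$-form. Since $\Delta_{\bar\partial_{\theta_0}}\alpha=0$ forces $\bar\partial_{\theta_0}\alpha=0$, one has $\partial_{\theta_0}\alpha+\bar\partial_{\theta_0}\overline\alpha=\di_{\theta_0}(2\mathrm{Re}\,\alpha)$, so by \eqref{reltwisteddiff} $\Omega=\di_\theta\beta$ with $\beta=e^f(\di^c_{\theta_0}h+2\mathrm{Re}\,\alpha)$; thus $(\Omega,\theta)$ is lcK with $[\theta]=\mu$. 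Using $JT(h)=0$, $\mathcal{L}_{JT}\alpha=0$ and $[T,JT]=0$ one gets $JT(f)=0$, hence $\theta(JT)=0$ and $\mathcal{L}_{JT}\beta=0$, and a direct evaluation gives $\beta(JT)=-1$, so Lemma~\ref{critantiLee} identifies $JT$ as the anti-Lee field and therefore $T$ as the Lee field. Surjectivity is exactly the paragraph preceding the statement: from an lcK structure with $[\theta]=\mu$ and Lee field $T$, one writes $\theta=\theta_0+\di f$, applies Proposition~\ref{decomposition} to $(e^{-f}\Omega,\theta_0)$, averages the resulting pair over the torus $\overline{\{\varphi_t\}}$ generated by $JT$, and notes that the two positivity conditions hold because $\Omega>0$ and because \eqref{eqantiLee} holds.

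For the fiber description, suppose $(h,\alpha)$ and $(\widetilde h,\widetilde\alpha)$ map to the same $(\Omega,\theta)$. Equality of the Lee forms forces $f-\widetilde f$ to be constant, and equality of the fundamental forms then gives, with $c:=e^{\widetilde f-f}>0$, the relation $\di_{\theta_0}(\di^c_{\theta_0}h'+2\mathrm{Re}\,\alpha')=0$, where $h':=h-c\widetilde h$ and $\alpha':=\alpha-c\widetilde\alpha$ (using $\bar\partial_{\theta_0}\alpha'=0$). The decisive step is to isolate $\alpha'$: since the twisted cohomology vanishes (Corollary~\ref{vt}), the $\di_{\theta_0}$-closed real $1$-form $\di^c_{\theta_0}h'+2\mathrm{Re}\,\alpha'$ equals $\di_{\theta_0}v$ for some function $v$, which is unique (as $\di_{\theta_0}$ is injective on functions) and hence $JT$-invariant. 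Taking $(0,1)$-parts yields $\alpha'=\bar\partial_{\theta_0}(v-ih')$, so $\alpha'$ is $\bar\partial_{\theta_0}$-exact; being also $\bar\partial_{\theta_0}$-harmonic as a difference of elements of $\ker\Delta_{\bar\partial_{\theta_0}}$, it vanishes by Hodge orthogonality. Thus $\alpha=c\widetilde\alpha$, and $u:=v-ih'\in\mathcal{H}^{0,0}_{\bar\partial_{\theta_0},JT}(M)$ with $h-c\widetilde h=h'=\mathrm{Im}(-u)$, as required.

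For the converse I would assume $\alpha=c\widetilde\alpha$ and $h-c\widetilde h=\mathrm{Im}(u_0)$ with $u_0\in\mathcal{H}^{0,0}_{\bar\partial_{\theta_0},JT}(M)$, and verify that the images agree. Two identities, both consequences of $\bar\partial_{\theta_0}u_0=0$, drive the argument: first $\di_{\theta_0}\di^c_{\theta_0}(\mathrm{Im}\,u_0)=0$, which follows from $\di^c_{\theta_0}(\mathrm{Im}\,u_0)=-\mathrm{Re}(\partial_{\theta_0}u_0)$ together with $\di_{\theta_0}\partial_{\theta_0}u_0=-\partial_{\theta_0}\bar\partial_{\theta_0}u_0=0$, and shows the two $(1,1)$-forms are proportional with ratio $c$; second $T(u_0)=\theta_0(T)u_0$, obtained by evaluating $\bar\partial u_0=u_0\theta_0^{0,1}$ on $T$ and using $JT(u_0)=0$ and $J\theta_0(T)=-\theta_0(JT)=0$. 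The second identity makes the correction term $-T(\mathrm{Im}\,u_0)+\mathrm{Im}(u_0)\theta_0(T)=-\mathrm{Im}(T(u_0)-\theta_0(T)u_0)$ vanish, so $e^{-f}=c\,e^{-\widetilde f}$; combined with the proportionality this gives $\theta=\widetilde\theta$ and $\Omega=\widetilde\Omega$. I expect the main obstacle to be precisely the extraction of $\alpha'$ in the forward direction, where one must combine the vanishing of the twisted cohomology (to produce the primitive $v$), the passage to the $(0,1)$-component (to exhibit the harmonic form $\alpha'$ as $\bar\partial_{\theta_0}$-exact), and Hodge orthogonality (to conclude $\alpha'=0$); the surrounding computations are routine but require careful bookkeeping with the twisted operators and with the placement of $T$ and $JT$ relative to the Vaisman data $T_0,JT_0$.
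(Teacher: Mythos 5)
Your proposal is correct and follows essentially the same route as the paper: the first part is the discussion preceding the theorem, and the fiber description rests on the vanishing of the twisted cohomology (Corollary \ref{vt}) to produce a primitive, a splitting into types to exhibit $\alpha-c\widetilde\alpha$ as $\bar\partial_{\theta_0}$-exact, and Hodge theory to conclude that it vanishes. The only differences are organizational: by taking the primitive of the real $1$-form $\di^c_{\theta_0}h'+2\mathrm{Re}\,\alpha'$ (so that $v$ is forced to be real and $JT$-invariant by the injectivity of $\di_{\theta_0}$ on functions) you bypass the paper's universal-cover argument for identifying $h-c\widetilde h$, and you additionally verify the converse implication of the ``if and only if'', which the paper leaves to the reader.
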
	

\begin{proof}
	The first part of the theorem was already proved above. It remains to determine under which circumstances two pairs $(h, \alpha), (\widetilde h, \widetilde\alpha)$ define the same lcK structure on $(M,J)$.
	
Assume that $(h, \alpha), (\widetilde h, \widetilde\alpha)$ are two pairs in $\mathcal{F}_{JT}$ that define the same lcK structure, \emph{i.e.} $\widetilde \Omega=\Omega$ and $\widetilde \theta=\theta$. The last equality is equivalent to $\di f=\di \widetilde f$, showing that there exists a positive constant $c$, such that $e^{-\widetilde f}=ce^{-f}$.Thus, $\widetilde \Omega=\Omega$ reads:
	\begin{equation*}
	i\partial_{\theta_0}\bar\partial_{\theta_0} (h-c\widetilde h)+\partial_{\theta_0}(\alpha-c\widetilde\alpha)+\bar\partial_{\theta_0}\overline{(\alpha-c\widetilde\alpha)}=0,
	\end{equation*}
	or equivalently, using the fact that $\alpha$ and $\widetilde\alpha$ are in the kernel of the twisted Laplacian $\Delta_{\theta_0}$, so in particular $\bar\partial_{\theta_0}\alpha=\bar\partial_{\theta_0}\widetilde\alpha=0$ and $\bar\partial^*_{\theta_0}\alpha=\bar\partial^*_{\theta_0}\widetilde\alpha=0$, as:
	\begin{equation}\label{eqomega1}
	\di_{\theta_0}[i\bar\partial_{\theta_0} (h-c\widetilde h)+\alpha-c\widetilde\alpha+\overline{\alpha}-c\overline{\widetilde\alpha}]=0,
	\end{equation}
	By Corollary \ref{vt}, the homology of $\di_{\theta_0}$ vanishes, so \eqref{eqomega1} is fulfilled if and only if there exists a function $b\in\mathcal{C}^{\infty}(M, \mC)$, such that $i\bar\partial_{\theta_0} (h-c\widetilde h)+\alpha-c\widetilde\alpha+\overline{\alpha}-c\overline{\widetilde\alpha}=\di_{\theta_0}b$, or equivalently, when considering on both sides the forms of type $(1,0)$, respectively $(0,1)$:
	\begin{equation}\label{eqomega2}
	\begin{cases}
	\overline{\alpha}-c\overline{\widetilde\alpha}=\partial_{\theta_0} b,\\
	\alpha-c\widetilde\alpha +i\bar\partial_{\theta_0} (h-c\widetilde h)=\bar\partial_{\theta_0} b.
	\end{cases}
	\Longleftrightarrow 
	\begin{cases}
	\alpha-c\widetilde\alpha=\bar\partial_{\theta_0} \bar b,\\
	\bar\partial_{\theta_0} \bar b +i\bar\partial_{\theta_0} (h-c\widetilde h)=\bar\partial_{\theta_0} b.
	\end{cases}
	\end{equation}
The last equation yields $\bar\partial_{\theta_0}(2\mathrm{Im}\, b-h+c\widetilde h)=0$. On the universal cover $\widetilde M$ of $M$, this equation translates into the condition that the real-valued function $(2\mathrm{Im}\, b-h+c\widetilde h)e^{-\varphi}$, where $\varphi$ is a primitive of $\theta_0$ on $\widetilde M$, is holomorphic. This is only possible if the function is constant, and because of the equivariance condition, this constant must actually vanish. Hence, we obtain that $h-c\widetilde h=2\mathrm{Im}\, b$.\\
On the other hand, since $\alpha$ and $\widetilde\alpha$ are in the kernel of $\Delta_{\theta_0}$, the first equation in \eqref{eqomega2} implies
\begin{equation}\label{eqb}
 \bar\partial_{\theta_0}^*\bar\partial_{\theta_0} \bar b=\bar\partial_{\theta_0}^*(\alpha-c\widetilde\alpha)=0.
 \end{equation}
Taking the scalar product with $\bar b$ and integrating over the compact manifold $M$ yields $\bar\partial_{\theta_0} \bar b=0$, which together with the first equation in \eqref{eqomega2} shows that $\alpha-c\widetilde\alpha=0$. We thus also obtain that $h-c\widetilde h=\mathrm{Im}\, (-2\bar b)$, with $-2\bar b\in \mathcal{H}^{0,0}_{\bar\partial_{\theta_0}, JT}(M)$. 
\end{proof}

Our last result concerns lcK structures with holomorphic Lee vector field on complex manifolds which are not necessarily of Vaisman type, but assuming instead that the lcK structure has a potential (cf. Definition \ref{potential}).

\begin{proposition}
	Let $(M,J, g,\Omega, \theta)$ be a compact lcK manifold with potential and with holomorphic Lee vector field $T$. Then $T$ is not vanishing at any point of $M$ and the following assertions hold:
	
		$(i)$ There exists a $JT$-invariant potential $h$.
		
		$(ii)$  Any $JT$-invariant potential $h$ satisfies the equation $T(h)-h\theta(T)+1=0$.
	
		$(iii)$  Any $JT$-invariant potential $h$ is positive.

\end{proposition}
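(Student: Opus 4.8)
The plan is to treat the three assertions in the order (i), then the nowhere-vanishing of $T$ together with (ii), and finally (iii), exploiting throughout that an lcK structure with potential on a compact manifold is automatically strict (Remark~\ref{strict}). This is what makes the anti-Lee criterion of Lemma~\ref{critantiLee} available here, even though we no longer assume $(M,J)$ to be of Vaisman type; the potential replaces the $\di_\theta$-exactness that Corollary~\ref{vt} would otherwise provide.

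First I would prove (i) by averaging. Since $T$ is the holomorphic Lee vector field of $(g,\Omega,\theta)$, Lemma~\ref{hk} shows that $JT$ is holomorphic and Killing for $g$, so its flow $\{\varphi_t\}$ lies in the compact group $\mathrm{Iso}(M,g)$ and has compact closure $G$, a torus with normalized Haar measure $\di\mu$. The key point is that $\varphi_t$ preserves all the structure at hand: it preserves $J$ and $g$ (hence $\Omega$), and since $\theta(JT)=g(T,JT)=\Omega(T,T)=0$ we get $\cL_{JT}\theta=\di(\theta(JT))=0$. Writing the given potential as $\Omega=\di_\theta\di^c_\theta h_0$ and applying $\varphi_t^*$, which commutes with $\di_\theta\di^c_\theta$ because $\varphi_t$ preserves $J$ and $\theta$, yields $\Omega=\di_\theta\di^c_\theta(\varphi_t^*h_0)$ for every $t$. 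By linearity, $h:=\int_{\gamma\in G}\gamma^*h_0\,\di\mu$ is then a $JT$-invariant potential, which proves (i).

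Next I would establish the nowhere-vanishing of $T$ and assertion (ii) simultaneously, for an \emph{arbitrary} $JT$-invariant potential $h$. Set $\beta:=\di^c_\theta h$, so that $\Omega=\di_\theta\beta$. The structure is strict, one has $\theta(JT)=0$, and $\cL_{JT}\beta=0$ because $JT$ preserves $J$, $\theta$ and $h$; moreover $JT$ is the anti-Lee vector field of $(\Omega,\theta)$, since $JT\lrcorner\Omega=-\theta$. Thus Lemma~\ref{critantiLee} applies with $K=JT$ and gives at once that $T$ is nowhere vanishing and that $\beta(JT)+1=0$. A direct computation, exactly the one leading to \eqref{eqantiLee}, gives $(\di^c_\theta h)(JT)=T(h)-h\theta(T)$, so the identity $\beta(JT)+1=0$ reads $T(h)-h\theta(T)+1=0$, which is precisely (ii).

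Finally, (iii) follows from (ii) by evaluating the identity at a minimum of $h$. At a minimum point $p$ of the smooth function $h$ on the compact manifold $M$ we have $\di h_p=0$, hence $T(h)(p)=0$, and (ii) then yields $h(p)\,\theta(T)(p)=1$. Since $\theta(T)=|T|^2_g>0$ everywhere by the nowhere-vanishing already established, we obtain $h(p)=1/|T_p|^2>0$, and because $p$ realizes the minimum, $h$ is positive everywhere. The argument is largely routine once the machinery is assembled; the points demanding care are verifying in step (i) that $\varphi_t^*$ commutes with $\di_\theta\di^c_\theta$ and that averaging preserves the potential equation, and checking in step (ii) the hypotheses of Lemma~\ref{critantiLee}, in particular the invariance $\cL_{JT}\beta=0$ and the strictness supplied by Remark~\ref{strict}.
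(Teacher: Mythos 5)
Your proposal is correct and follows essentially the same route as the paper: averaging the given potential over the closure of the flow of the Killing field $JT$ for (i), applying Lemma~\ref{critantiLee} to $\beta=\di^c_\theta h$ (with strictness supplied by Remark~\ref{strict}) to get the nowhere-vanishing of $T$ and the identity in (ii), and evaluating that identity at a minimum of $h$ for (iii). The only difference is presentational: you spell out the verification $(\di^c_\theta h)(JT)=T(h)-h\theta(T)$ and the hypotheses of the lemma in more detail than the paper does.
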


\begin{proof}
	
 The fact that $T$ is not vanishing at any point of $M$ is a direct consequence of 
Lemma~\ref{critantiLee}, as any lcK structure with potential on a compact manifold is strict (\emph{cf.} Remark~\ref{strict}).

	$(i)$  Let $h_0$ be a potential of the lcK structure, \emph{i.e.} $\Omega=\mathrm{d}_{\theta}\di_{\theta}^c h_0$. Since $\mathcal{L}_{JT}\theta=0$ and $\mathcal{L}_{JT}\Omega=0$, the flow $\varphi_t$ of $JT$ preserves $J$, $\theta$ and $\Omega$ and hence $\varphi_t^* h_0$ is also a potential of $(\Omega,\theta)$, for all $t$. Therefore, denoting as before by $G:=\overline{\{\varphi_t\}}$ the closure of the flow of $JT$ in $\mathrm{Iso}(M,g)$, the function $h:=\int_G \varphi_t^*h_0\, \mathrm{d}\mu$ is a $JT$-invariant potential.
	
$(ii)$ If the potential $h$ is $JT$-invariant, then we may apply  Lemma~\ref{critantiLee} to $\beta:=\di_{\theta}^c h$
 and obtain that $JT$ satisfies $\beta(JT)+1=0$, which is equivalent to $T(h)-h\theta(T)+1=0$.

$(iii)$ If $h$ be a $JT$-invariant potential, then by (ii), $h$ satisfies $T(h)-h\theta(T)+1=0$. If $p$ is a minimum of $h$ on $M$, then this equation implies that $h(p)=\frac{1}{\theta_p(T(p))}=\frac{1}{|T(p)|_g^2}>0$, hence $h>0$ on $M$.
\end{proof}

\end{document}